\newtheorem{theorem}{Theorem}[section]
\newtheorem{lemma}[theorem]{Lemma}
\newtheorem{proposition}[theorem]{Proposition}
\newtheorem{claim}[theorem]{Claim}
\newtheorem{corollary}[theorem]{Corollary}
\newtheorem{conjecture}[theorem]{Conjecture}
\newtheorem*{theoremA}{Theorem \ref{thm:discrete-propagation-of-smallness}'}
\theoremstyle{definition}   
\newtheorem{definition}[theorem]{Definition}
\newtheorem*{example}{Example}
\theoremstyle{remark}    
\newtheorem{remark}[theorem]{Remark}
\newtheorem*{remark*}{Remark}
\newtheorem*{notation*}{Notation}
\newcommand{\ep}{\varepsilon}
\newcommand{\eps}{\varepsilon}
\newcommand{\Eb}{\mathbb{E}}
\newcommand{\Nb}{\mathbb{N}}
\newcommand{\R}{\mathbb{R}}
\newcommand{\Rb}{\mathbb{R}}
\newcommand{\Zb}{\mathbb{Z}}
\newcommand{\Z}{\mathbb{Z}}
\newcommand{\Cay}{\mathrm{Cay}}
\newcommand{\Prob}{\mathrm{Prob}}
\begin{document}
\title{Harmonic functions on the lattice: Absolute monotonicity and
propagation of smallness}
\author{Gabor Lippner and Dan Mangoubi}
\date{}
\maketitle
\begin{abstract}
In this work we establish a connection between two classical 
notions, unrelated so far: Harmonic functions on the
one hand and absolutely monotonic functions on 
the other hand. We use this to prove convexity type and propagation
of smallness results for harmonic functions on the lattice.
\end{abstract}

\section{Introduction}
The aim of the present paper is to discuss
convexity properties of discrete harmonic functions.
One of our main discoveries is that if $u$ is a harmonic function then $\Delta^k u^2$ is non-negative for every $k\in\Nb$, and that this fact can be used 
to prove convexity type results.
A minor byproduct of this work is an elementary new proof of
the well known Liouville property and finite dimensionality of harmonic functions of bounded polynomial growth in $\Zb^d$.
Some of our results can be adapted also to harmonic functions in $\Rb^d$,
but we do not pursue this direction here, since
working in the discrete world, we are faced with
 several challenges which do not exist in the continuous world.
\subsection{Background: Hadamard's Three Circles Theorem, Agmon's Theorem.}
The  connection between  holo\-morphic functions and convexity
goes back to Hadamard. For $f$ holomorphic, let
$M(r):=\max_{B(r)} |f|$. Hadamard proved the Three Circles Theorem: $M(2r)\leq \sqrt{M(r)M(4r)}$ or equivalently,
$\log M(r)$ is a convex function as a function of $\log r$.
Once this theorem is known, a version for harmonic function $u$
is naturally sought after. It is readily seen that $M_u(r):=\max_{B(r)}|u|$ is a convex function as a function of $\log r$. However, only
approximately is it true that $\log M_u$ is a convex function of $\log r$. The approximate logarithmic convexity of $M_u(r)$ is due to
Landis' school \cite{landis63}*{Ch. II.2}.
Apparently, Agmon was the first to  observe that
if one replaces the function $M_u(r)$ with an $L^2$-version on a sphere
then one gets a precise logarithmic convexity result:
\begin{theorem}[\cite{agmon66}]
\label{thm:agmon}
Let $u$ be a harmonic function defined in the open ball of radius $R$, $B(0, R)\subset\Rb^d$.
Let the $L^2$-growth function be defined by
\begin{equation}
\label{def:q-agmon}
q_u(r):=\frac{1}{|S^{d-1}(r)|}\int_{S^{d-1}(r)} u^2 d\sigma_r.
\end{equation}
Then $\log q_u$ is a convex function of $\log r$ for $\log r<\log R$.
\end{theorem}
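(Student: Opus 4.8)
The plan is to reduce the statement to the elementary fact that a convergent sum of exponential functions with non-negative coefficients is logarithmically convex, using the expansion of $u$ into homogeneous harmonic polynomials. First I would rescale the integral to the unit sphere: writing $\omega_{d-1}:=|S^{d-1}(1)|$ and substituting $x=r\omega$ with $\omega\in S^{d-1}$,
\[
q_u(r)=\frac{1}{\omega_{d-1}}\int_{S^{d-1}}u(r\omega)^2\,d\sigma(\omega).
\]

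Next I would use the classical fact that a function harmonic in $B(0,R)\subset\Rb^d$ admits a decomposition $u=\sum_{k\ge 0}h_k$, where each $h_k$ is a homogeneous harmonic polynomial of degree $k$ and the series converges absolutely and uniformly on compact subsets of $B(0,R)$; in particular $u(r\omega)=\sum_{k\ge0}r^k h_k(\omega)$ for $r<R$. Since restrictions to $S^{d-1}$ of homogeneous harmonic polynomials of distinct degrees are orthogonal in $L^2(S^{d-1})$ (they are Laplace–Beltrami eigenfunctions with distinct eigenvalues $k(k+d-2)$), and the convergence is uniform on each sphere $|x|=r<R$, I may integrate termwise to obtain
\[
\int_{S^{d-1}}u(r\omega)^2\,d\sigma(\omega)=\sum_{k\ge0}c_k\,r^{2k},\qquad c_k:=\|h_k\|_{L^2(S^{d-1})}^2\ge 0 .
\]

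Finally I would set $t:=\log r$, so that $q_u=\omega_{d-1}^{-1}\sum_{k\ge0}c_k e^{2kt}$ is a convergent series of exponentials with non-negative coefficients. For any such function $f(t)=\sum_k a_k e^{\lambda_k t}$ with $a_k\ge0$, the Cauchy–Schwarz inequality gives
\[
f\!\Big(\tfrac{t_1+t_2}{2}\Big)=\sum_k\big(a_k e^{\lambda_k t_1}\big)^{1/2}\big(a_k e^{\lambda_k t_2}\big)^{1/2}\le f(t_1)^{1/2}f(t_2)^{1/2},
\]
i.e. $\log f$ is midpoint-convex, hence (being continuous) convex. Applying this to $f=q_u$ proves that $\log q_u$ is convex in $\log r$. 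In the degenerate case $u\equiv 0$ on a ball one has $q_u\equiv 0$ and the statement is vacuous; otherwise unique continuation forces some $c_k>0$, so $q_u(r)>0$ throughout and $\log q_u$ is genuinely finite.

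\textbf{Main obstacle and an alternative.} The only non-routine points are the local uniform convergence of the harmonic-polynomial expansion and the justification of termwise integration; both are standard but should be stated carefully. Alternatively, one can avoid series and argue via Almgren's frequency function $N(r)=r\,D(r)/H(r)$ with $H(r)=\int_{\partial B_r}u^2$ and $D(r)=\int_{B_r}|\nabla u|^2$: using Green's identity $D=\int_{\partial B_r}u\,\partial_\nu u$, the scaling identity $H'=\frac{d-1}{r}H+2D$, and the Rellich–Pohozaev identity $D'=\frac{d-2}{r}D+2\int_{\partial B_r}(\partial_\nu u)^2$, one computes $\frac{d}{d\log r}\log q_u=2N(r)$ and then $\frac{N'}{N}=\frac{2}{\int u\,\partial_\nu u\,\int u^2}\big(\int(\partial_\nu u)^2\int u^2-(\int u\,\partial_\nu u)^2\big)\ge 0$ by Cauchy–Schwarz; hence $\frac{d^2}{d(\log r)^2}\log q_u=2rN'(r)\ge0$. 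In this route the extraction of the Rellich–Pohozaev identity (differentiating $D$ and integrating by parts against the radial vector field) is the one computation that requires care.
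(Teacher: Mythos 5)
Your proof is correct and follows essentially the route the paper indicates for this cited result: expand $u$ in spherical harmonics so that $q_u(r)=\sum_k c_k r^{2k}$ with $c_k\ge 0$ (this is exactly how the paper explains Theorem~\ref{thm:abs-mono}), and then conclude log-convexity in $\log r$ from the non-negativity of the coefficients via Cauchy--Schwarz, just as the paper does for $Q_{c,u}$ in Theorem~\ref{cor:continuous-propagation-of-smallness} using Lemma~\ref{lem:additive}. No gaps; the Almgren frequency alternative you sketch is a valid but different classical route not used in the paper.
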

Here we should also mention that the idea to consider 
the integral of $u^2$ on arcs (in two dimensions) 
is due to Carleman, and in particular, a second order differential inequality closely related to the logarithmic convexity was proved 
in~\cite{carleman-1933}. The observation of Agmon was
that in a specific setting Carleman's differential inequality can be
strengthened to obtain the logarithmic convexity.
\subsection{Main result I: Discrete absolute monotonicity}
In fact, it turns out that a stronger property of $q_u(r)$ holds. We recall the following definition
\begin{definition}[\cite{bernstein1914}]
A $C^{\infty}$-function $f:[0,T)\to\Rb$ is called
\emph{absolutely monotonic} in $[0, T)$ if $f$ and all its derivatives are non-negative.
\end{definition}

A fundamental property of absolutely monotonic functions is given by
a theorem of S.~Bernstein (see also~\cite{widder41}*{Ch. IV}):
\begin{theorem}[\cite{bernstein1914}]
\label{thm:bernstein}
An absolutely monotonic function in $[0, T)$ is real-analytic in $(0, T)$. Moreover, it extends to a real-analytic function in $(-T, T)$.
\end{theorem}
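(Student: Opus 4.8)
To prove Theorem~\ref{thm:bernstein}, the plan is to expand $f$ in its Taylor series around $0$ and show that the remainder tends to zero on all of $[0,T)$, using only the sign information $f^{(k)}\ge 0$. Fix $x$ with $0\le x<T$ and an auxiliary point $\rho$ with $x<\rho<T$. Taylor's formula with integral remainder gives
\begin{equation*}
f(x)=\sum_{k=0}^{n-1}\frac{f^{(k)}(0)}{k!}x^k+R_n(x),\qquad
R_n(x)=\frac{1}{(n-1)!}\int_0^x (x-t)^{n-1}f^{(n)}(t)\,dt,
\end{equation*}
and after the substitution $t=xs$,
\begin{equation*}
R_n(x)=\frac{x^n}{(n-1)!}\int_0^1 (1-s)^{n-1}f^{(n)}(xs)\,ds.
\end{equation*}

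The key step is a comparison between $R_n(x)$ and $R_n(\rho)$. Since $f^{(n+1)}\ge 0$, the function $f^{(n)}$ is non-decreasing on $[0,T)$, so $f^{(n)}(xs)\le f^{(n)}(\rho s)$ for every $s\in[0,1]$; all integrands being non-negative, this yields
\begin{equation*}
0\le R_n(x)\le \Bigl(\frac{x}{\rho}\Bigr)^{n} R_n(\rho).
\end{equation*}
On the other hand, $R_n(\rho)=f(\rho)-\sum_{k=0}^{n-1}\frac{f^{(k)}(0)}{k!}\rho^k\le f(\rho)$, because every subtracted term is non-negative. Hence $0\le R_n(x)\le (x/\rho)^{n} f(\rho)$, and since $x<\rho$ the right-hand side tends to $0$ as $n\to\infty$.

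It follows that $f(x)=\sum_{k\ge 0}\frac{f^{(k)}(0)}{k!}x^k$ for every $x\in[0,T)$. Because this power series has non-negative coefficients and converges at every $\rho<T$, its radius of convergence is at least $T$; it therefore defines a function that is analytic on the interval $(-T,T)$ and agrees with $f$ on $[0,T)$. This simultaneously produces the desired real-analytic extension to $(-T,T)$ and shows that $f$ is real-analytic on $(0,T)$, real-analyticity being local (or, directly, by re-expanding the convergent power series about any interior point).

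I do not expect a serious obstacle here; the one point requiring care is the passage from "$f\in C^\infty$ with one-sided derivatives at the endpoint $0$" to a legitimate Taylor expansion whose remainder is controlled purely by signs. This is exactly where the monotonicity of each $f^{(n)}$ (a consequence of $f^{(n+1)}\ge 0$) enters, and where the auxiliary point $\rho$ converts the qualitative non-negativity of the derivatives into the quantitative bound $(x/\rho)^{n} f(\rho)$ that forces convergence of the Taylor series up to the endpoint.
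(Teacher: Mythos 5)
Your proof is correct: the comparison $0\le R_n(x)\le (x/\rho)^{n}R_n(\rho)\le (x/\rho)^{n}f(\rho)$, obtained from the monotonicity of $f^{(n)}$ (since $f^{(n+1)}\ge 0$) together with the non-negativity of the discarded Taylor terms at $\rho$, is exactly the classical argument for Bernstein's theorem. The paper does not prove Theorem~\ref{thm:bernstein} itself --- it cites Bernstein's 1914 paper and Widder's book --- and your argument is essentially the standard proof found in those references, so it matches the intended source rather than diverging from it.
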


We observe
\begin{theorem}
\label{thm:abs-mono}
If $u:B(0, R)\to\Rb$ is harmonic, then the function $q_u(r)$ is absolutely monotonic in $[0, R)$.
\end{theorem}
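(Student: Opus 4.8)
The plan is to reduce the statement to a computation with spherical harmonics. First I would expand $u$ in the ball $B(0,R)$ in its Fourier–Laplace series: writing $u = \sum_{k\ge 0} u_k$ where $u_k$ is the homogeneous harmonic polynomial of degree $k$ appearing in the expansion, one has $u_k(r\omega) = r^k Y_k(\omega)$ for $\omega\in S^{d-1}$ and a spherical harmonic $Y_k$ of degree $k$. Since spherical harmonics of distinct degrees are orthogonal on $S^{d-1}$, the cross terms drop out of the spherical average and we get the clean formula
\begin{equation}
\label{eq:qu-series}
q_u(r) = \sum_{k\ge 0} c_k\, r^{2k}, \qquad c_k := \frac{1}{|S^{d-1}|}\int_{S^{d-1}} Y_k(\omega)^2\, d\sigma(\omega) \ \ge 0 .
\end{equation}
All coefficients $c_k$ are manifestly non-negative, and the series converges for $r<R$ (it is, up to normalization, the $L^2$-norm on the sphere of radius $r$, which is finite there). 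A power series with non-negative coefficients is absolutely monotonic on its interval of convergence, since each term-by-term derivative $\frac{d^j}{dr^j} r^{2k}$ contributes a non-negative quantity for $r\ge 0$; differentiation under the summation sign is justified by local uniform convergence inside the disk of convergence. This immediately gives absolute monotonicity of $q_u$ on $[0,R)$, and as a bonus recovers Theorem~\ref{thm:agmon}, because $\log(\sum c_k r^{2k})$ is a convex function of $\log r$ whenever all $c_k\ge 0$ (this is the standard fact that $t\mapsto\log\sum c_k e^{2kt}$ is convex, being a log-sum of log-convex functions).

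The main obstacle is not any single hard estimate but making the spherical-harmonic expansion rigorous in exactly the generality claimed: $u$ is merely assumed harmonic (hence real-analytic) on the \emph{open} ball, with no growth hypothesis at the boundary. I would justify \eqref{eq:qu-series} by fixing $r<R$, choosing $r<\rho<R$, and using that $u$ restricted to the closed ball $\overline{B(0,\rho)}$ is smooth, so its restriction to $S^{d-1}(\rho)$ lies in $L^2$ and expands in spherical harmonics; harmonicity then forces the solid-harmonic form $u_k(x)=|x|^k Y_k(x/|x|)$ inside, and one reads off that the $L^2(S^{d-1}(r))$-expansion has coefficients $c_k r^{2k}$ with the same $c_k$ for every $r<R$. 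The convergence of $\sum c_k \rho^{2k}$ for each $\rho<R$ is Parseval on the sphere of radius $\rho$; letting $\rho\uparrow R$ gives convergence on all of $[0,R)$.

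An alternative, more self-contained route that avoids quoting the full spectral theory of the Laplace–Beltrami operator is to work directly with the differential operator $r\frac{d}{dr}$. One shows by Green's identity that $q_u$ satisfies
\begin{equation}
\label{eq:qu-ode}
\Bigl(r\frac{d}{dr}\Bigr)^2 q_u(r) = \frac{2}{|S^{d-1}|}\, r^{2-d}\!\int_{B(0,r)} |\nabla u|^2\, dx \ \ge 0,
\end{equation}
which is the Carleman/Agmon differential inequality in its sharp form; iterating and differentiating \eqref{eq:qu-ode} while tracking signs, or simply integrating it twice, reproduces the non-negativity of all derivatives. I expect the power-series argument to be cleaner to write, so I would present that as the main proof and relegate \eqref{eq:qu-ode} to a remark.
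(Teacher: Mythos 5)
Your main argument --- expanding $u$ in homogeneous harmonic polynomials, using orthogonality on the sphere to write $q_u(r)=\sum_{k\ge 0}c_k r^{2k}$ with $c_k\ge 0$, and observing that a convergent power series with non-negative coefficients is absolutely monotonic --- is correct and is precisely the spherical-harmonics argument the paper itself invokes (the paper gives no more detail than this sketch). One small caveat, which does not affect the proof since it only appears in your optional closing remark: the differential identity there is off, as Green's identity gives the first-order relation $r\,q_u'(r)=\frac{2}{|S^{d-1}|}\,r^{2-d}\int_{B(0,r)}|\nabla u|^2\,dx$, whereas $\bigl(r\frac{d}{dr}\bigr)^2 q_u(r)=\sum_k 4k^2c_kr^{2k}$ is a genuinely different (larger) quantity.
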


Using Theorem~\ref{thm:bernstein}, 
it is an exercise to check that absolute monotonicity
implies logarithmic convexity on the scale of $\log r$~\cite{polya-szego-vol-I}*{part II, problem~123}.
Theorem~\ref{thm:abs-mono} is easily understood by considering the spherical harmonics expansion of harmonic functions.

A central idea we try to convey in this paper is that
the strengthened
Agmon's Theorem~\ref{thm:abs-mono} lends itself more naturally
to a  discrete analogue than Theorem~\ref{thm:agmon}.
To explain this, we let $(X_n)_{n=0}^{\infty}$ be a simple random walk on $\Zb^d$
starting at~$0$, and $(Y_t)_{t\geq 0}$ be a continuous time random walk on $\Zb^d$ starting at~$0$.
We introduce the following discrete $L^2$-growth functions

\begin{definition}
Let $B_R\subseteq \Zb^d$ be the closed ball of radius $R$ centered at $0$.
The discrete growth function of $u: B_R \to \Rb$ is defined by 
$$\forall 0\leq n\leq R, \quad Q_u(n):=\Eb \left(u(X_n)^2\right).$$
If $u$ is globally defined we set
$$Q_{c, u}(t):=\Eb\left(u(Y_t)^2\right).$$
\end{definition}

\begin{remark}\label{rem:infiniteQ}
$Q_{c, u}(t)$ could be infinite. However, there exists $T\geq 0$
such that $Q_{c, u}(t)<\infty$ for $0<t<T$ and $Q_{c, u}(t)=\infty$
for $t>T$. Indeed, if we denote by $p_c(t, x)$
the continuous time heat kernel on $\Zb^d$, then it follows directly from the heat equation and the fact that $p_c \geq 0$  that $e^{t}p_c(t, x)$ is non-decreasing in $t$. The claim follows since
$Q_{c, u}(t, x)=e^{-t}\sum_{x\in \Zb^d} u(x)^2 e^tp_c(t, x)$. 
\end{remark}

\begin{remark*} Observe that unlike the $\Rb^d$ version~\eqref{def:q-agmon}, where we consider
spheres in space, here we consider spheres in time.
\end{remark*}

We define
\begin{definition}
\label{def:abs-mono-discrete-derivative}
Let $f:\Nb_0\cap[0, R]\to\Rb$ be a discrete function.
We say that $f$ is \emph{absolutely monotonic in $[0, R]$ in the discrete sense}
if $f^{(k)}(n)\geq 0$ for all $k, n\in\Nb_0$ such that $k+n\leq R$.
Here $f^{(k)}$ is the $k$-th forward discrete derivative, i.e.
$f'(n)=f(n+1)-f(n)$ and $f^{(k)}:={f^{(k-1)}}'$.
\end{definition}

\begin{example} $f(n)=\binom{n}{k}$ is absolutely monotonic in $\Nb_0$ in the discrete sense.
\end{example}

A main result we prove is
\begin{theorem}
\label{thm:discrete-abs-mono}
Let $u: B_R\to \Rb$ be harmonic. Then, 
the function $Q_u$ is absolutely monotonic in $[0, R]$ in the discrete sense.
If $u$ is globally defined and $Q_{c,u}$ is finite on $[0,T)$, then it is absolutely monotonic in $[0,T)$.
\end{theorem}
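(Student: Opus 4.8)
The plan is to reduce everything to a single algebraic fact: if $u$ is harmonic on $B_R$ then $\Delta^k(u^2)\geq 0$ pointwise on $B_{R-2k}$ for every $k\in\Nb_0$, where $\Delta$ is the (averaging) discrete Laplacian. Indeed, once this is known, I would express the discrete derivatives of $Q_u$ in terms of iterated Laplacians of $u^2$. Writing $P$ for the one-step transition operator of the simple random walk, so that $\Delta = P - \mathrm{id}$ and $Q_u(n) = (P^n u^2)(0)$, the forward difference in $n$ becomes $Q_u'(n) = (P^n(P-\mathrm{id})u^2)(0) = (P^n \Delta u^2)(0)$, and by induction $Q_u^{(k)}(n) = (P^n \Delta^k u^2)(0)$. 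Since $P$ has a non-negative kernel and $(P^n g)(0)$ only samples $g$ on $B_n$, the condition $k+n\leq R$ guarantees that we are averaging the values of $\Delta^k u^2$ over $B_n \subseteq B_{R-k}\subseteq B_{R-2k}\cup(\text{points where }\Delta^k u^2\text{ still makes sense})$; being careful with the exact radius of definition, $\Delta^k u^2$ is defined and non-negative on $B_n$, hence $Q_u^{(k)}(n)\geq 0$. The continuous-time statement is even cleaner: $Q_{c,u}(t) = (e^{t\Delta}u^2)(0)$ (with the generator normalized so $e^{t\Delta}$ is the continuous-time semigroup), so $Q_{c,u}^{(k)}(t) = (e^{t\Delta}\Delta^k u^2)(0)$, and $e^{t\Delta}$ has a strictly positive kernel supported on all of $\Zb^d$; here $u$ is globally defined so $\Delta^k u^2\geq 0$ everywhere, and the only subtlety is justifying differentiation under the expectation, which is handled by the finiteness of $Q_{c,u}$ on $[0,T)$ together with the monotonicity observation in Remark~\ref{rem:infiniteQ}.

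So the real work is the pointwise inequality $\Delta^k(u^2)\geq 0$ for harmonic $u$. I would prove this by induction on $k$. The base case $k=0$ is trivial, and $k=1$ is the classical computation $\Delta(u^2) = 2u\,\Delta u + |\nabla u|^2 = |\nabla u|^2 \geq 0$ (the discrete Bochner/chain-rule identity, valid since $\Delta u=0$), where $|\nabla u|^2(x)$ denotes the average of $(u(y)-u(x))^2$ over neighbors $y$ of $x$. The induction step is the crux and I expect it to be the main obstacle: one needs that $\Delta^k$ applied to $u^2$ stays non-negative even though $\Delta^j(u^2)$ for $j\geq 1$ is no longer of the form (square of a harmonic function), so a naive repetition of the $k=1$ argument fails. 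The natural strategy is to find the right object to induct on — perhaps showing that $\Delta^k(u^2)$ can be written as a non-negative combination of squares of finite differences of $u$ of various orders, i.e. a ``sum of squares'' representation, with the harmonicity of $u$ used to kill all the indefinite cross terms. Establishing that the cross terms vanish because of $\Delta u = 0$, and that the surviving coefficients are non-negative, is the combinatorial heart of the matter.

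Concretely, I would look for an identity of the shape $\Delta^k(u^2) = \sum_{\text{paths/multi-indices }\alpha} c_\alpha\, (D_\alpha u)^2$ with $c_\alpha\geq 0$, where $D_\alpha$ are iterated discrete gradients along walks in $\Zb^d$; the mean-value property of $u$ should force the emergence of exact squares after repeated application of $\Delta$. An alternative, possibly slicker route is spectral/probabilistic: interpret $(P^n \Delta^k u^2)(0)$ directly as a positively-weighted sum over pairs of random walk paths of a quantity that is manifestly a square — this is morally the discrete analogue of the spherical-harmonics proof mentioned after Theorem~\ref{thm:abs-mono}, where each Fourier/eigenmode contributes a non-negative term. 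Whichever representation is used, the remaining steps (bookkeeping of radii so that $\Delta^k u^2$ is evaluated only where $u$ is defined, i.e. on $B_{R-2k}$, and the measure-theoretic justification in the continuous-time case) are routine. The one genuine difficulty is the algebraic identity making non-negativity of $\Delta^k(u^2)$ transparent; everything else is assembly.
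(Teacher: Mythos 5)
There is a genuine gap: the one fact your whole argument rests on --- $\Delta^k(u^2)\ge 0$ pointwise for harmonic $u$ --- is never proved for $k\ge 2$. You prove $k=1$, correctly identify that a sum-of-squares representation of $\Delta^k(u^2)$ would finish the job, and then leave precisely that step as ``the combinatorial heart of the matter,'' to be found later. A proposal that stops at ``look for an identity of the shape $\Delta^k(u^2)=\sum_\alpha c_\alpha (D_\alpha u)^2$'' has not proved the theorem, since everything else (the identity $Q_u^{(k)}(n)=\Eb\bigl((\Delta^k u^2)(X_n)\bigr)$, the radius bookkeeping, the continuous-time differentiation) is, as you say yourself, assembly.

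Moreover, you misdiagnose where the difficulty lies. You assert that ``a naive repetition of the $k=1$ argument fails'' because $\Delta(u^2)$ is no longer the square of a harmonic function. But your own $k=1$ computation gives more than positivity: it gives
\begin{equation*}
\Delta(u^2)(x)=\frac{1}{|S|}\sum_{s\in S}\bigl(u(x+s)-u(x)\bigr)^2 ,
\end{equation*}
and on $\Zb^d$ (or any Abelian Cayley graph) each difference $u_s(x):=u(x+s)-u(x)$ is again \emph{harmonic}, because translation commutes with the Laplacian. So the class ``finite sums of squares of harmonic functions'' is preserved by $\Delta$, and the induction is immediate: applying the $k=1$ identity to each $u_s$ and iterating yields exactly the sum-of-squares formula you were hoping for,
\begin{equation*}
\Delta^k (u^2)=\frac{1}{|S|^k}\sum_{s_1,\dots,s_k\in S}\bigl(u_{s_1\cdots s_k}\bigr)^2\ge 0,
\end{equation*}
with no cross terms to kill and no nontrivial combinatorics. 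This translation-invariance observation is the missing idea; without it your induction has no step. (Two smaller points: $\Delta^k(u^2)$ is defined on $B_{R-k}$, not $B_{R-2k}$, which is what matches the condition $k+n\le R$ in the statement; and the interchange of differentiation and summation in the continuous-time case does require the explicit kernel estimates you wave at, though that part is routine.)
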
 

Besides the main applications of Theorem~\ref{thm:discrete-abs-mono} discussed below, we also immediately obtain a new simple proof of the finite dimensionality of the space of harmonic functions of bounded polynomial growth.

\begin{corollary}\label{cor:zeros-vs-growth}
Let $u:\Zb^d\to\Rb$ be a non-zero harmonic function. Suppose it has a degree $M$-polynomial growth,
i.e. $\limsup_{|x|\to\infty}|u(x)|/|x|^M< \infty$.
Then~$u$ must be a polynomial of degree at most $M$.
In particular, the space of degree $M$-polynomial growth harmonic functions is finite dimensional.
Moreover, the function $u$ cannot vanish identically on a ball of radius $M$. 
\end{corollary}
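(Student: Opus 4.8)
The plan is to use Theorem~\ref{thm:discrete-abs-mono} to convert polynomial growth of $u$ into a pointwise bound on its nearest-neighbor differences, and then to induct on $M$: the differences of $u$ are again harmonic but of one degree lower growth.

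\textbf{Setup and the base case $M=0$.}
Fix an arbitrary $x_0\in\Zb^d$ and set $Q(n):=\Eb\big(u(x_0+X_n)^2\big)$. Since $u$ is harmonic on every ball centered at $x_0$, Theorem~\ref{thm:discrete-abs-mono} shows that $Q$ is absolutely monotonic on $\Nb_0$ in the discrete sense. Keeping only the $k=0$ and $k=1$ terms in the nonnegative expansion $Q(n)=\sum_{k=0}^n\binom{n}{k}Q^{(k)}(0)$ gives
\begin{equation}\label{eq:cor-grad}
Q(1)-Q(0)\ \le\ \frac{Q(n)}{n}\qquad\text{for every }n\ge 1 .
\end{equation}
A direct computation using harmonicity ($\tfrac{1}{2d}\sum_{y\sim x_0}u(y)=u(x_0)$) shows $Q(1)-Q(0)=\tfrac{1}{2d}\sum_{y\sim x_0}(u(y)-u(x_0))^2$, so the left side of \eqref{eq:cor-grad} dominates $\tfrac{1}{2d}(u(x_0+e_j)-u(x_0))^2$ for each standard basis vector $e_j$. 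When $M=0$ the function $|u|$ is bounded, so $Q(n)=O(1)$ and the right side of \eqref{eq:cor-grad} tends to $0$; hence $u$ is constant across every edge, i.e.\ $u$ is constant.

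\textbf{The inductive step.}
Assume the statement for $M-1$ and let $u$ be harmonic with $|u(x)|\le C(1+|x|)^M$. Then $u(x_0+X_n)^2\le C^2(1+|x_0|+|X_n|)^{2M}$, and the standard diffusive moment bound $\Eb(|X_n|^{2M})=O(n^M)$ yields $Q(n)=O\big((1+|x_0|)^{2M}+n^M\big)$. Choosing $n$ of order $(1+|x_0|)^2$ in \eqref{eq:cor-grad} gives $Q(1)-Q(0)=O\big((1+|x_0|)^{2M-2}\big)$, hence
\[
\big|u(x_0+e_j)-u(x_0)\big|=O\big((1+|x_0|)^{M-1}\big),\qquad j=1,\dots,d,
\]
with the implied constant independent of $x_0$. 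Each difference $v_j(x):=u(x+e_j)-u(x)$ is again harmonic on $\Zb^d$ (the Laplacian commutes with translations) and has degree-$(M-1)$ polynomial growth, so by the inductive hypothesis $v_j$ is a polynomial of degree $\le M-1$. Since the first differences of $u$ in all $d$ coordinate directions are polynomials of degree $\le M-1$, a routine discrete-antiderivative argument (induction on $d$, using that $\sum_{k=0}^{n-1}k^a$ is a polynomial of degree $a+1$ in $n$) shows that $u$ is a polynomial of degree $\le M$.

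\textbf{The remaining assertions, and the main difficulty.}
Finite dimensionality is then immediate: the space of harmonic functions of degree-$M$ polynomial growth lies inside the $\binom{M+d}{d}$-dimensional space of polynomials of degree $\le M$ on $\Zb^d$. For the last assertion, note that any ball of radius $M$ in $\Zb^d$ contains a translate of the lattice simplex $\Sigma_M:=\{y\in\Zb_{\ge 0}^d:\ y_1+\dots+y_d\le M\}$ (indeed $|y|\le y_1+\dots+y_d\le M$ on $\Sigma_M$ in any of the usual norms), and $\Sigma_M$ is unisolvent for polynomials of total degree $\le M$ — a short induction on $d$ and $M$ shows that a polynomial of degree $\le M$ vanishing on $\Sigma_M$ is identically $0$. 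Hence a \emph{non-zero} $u$ as above, being such a polynomial, cannot vanish identically on any ball of radius $M$. The one genuinely substantive step is the inductive step, and within it the passage from growth of $u$ to $|u(x_0+e_j)-u(x_0)|=O((1+|x_0|)^{M-1})$: this is exactly where absolute monotonicity of $Q$ (Theorem~\ref{thm:discrete-abs-mono}) enters, via \eqref{eq:cor-grad}, balanced against the diffusive bound $\Eb(|X_n|^{2M})=O(n^M)$ through the choice $n\asymp(1+|x_0|)^2$.
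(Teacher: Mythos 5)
Your proof is correct, but it follows a genuinely different route from the paper's. The paper exploits absolute monotonicity at \emph{all} orders at the single center $0$: by Theorem~\ref{thm:finite-computation}, $Q_u(n)\geq(\Delta^k u^2)(0)\binom{n}{k}$, so the growth bound $Q_u(n)=O(n^{2M})$ forces $(\Delta^k u^2)(0)=0$ for every $k>2M$; the sum-of-squares identity of Remark~\ref{rem:derivatives} then makes all iterated differences of order $2M+1$ vanish identically, so $u$ is a polynomial of degree at most $2M$, and the growth hypothesis cuts this to $M$; the radius-$M$ rigidity drops out of the same Newton series, since vanishing on $B_M$ kills the remaining coefficients and gives $Q_u\equiv 0$. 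You instead use only the first-order Newton coefficient, recentred at an arbitrary $x_0$, to extract a uniform gradient estimate $|u(x_0+e_j)-u(x_0)|=O((1+|x_0|)^{M-1})$ by balancing absolute monotonicity against the diffusive moment bound $\Eb|X_n|^{2M}=O(n^M)$ with $n\asymp(1+|x_0|)^2$, and then run an induction on $M$ (differences are harmonic of lower growth, hence polynomials, hence $u$ is a polynomial by discrete antidifferentiation), handling the radius-$M$ statement by a separate unisolvence argument for the lattice simplex $\Sigma_M$. Both arguments rest on the same engine, Theorem~\ref{thm:discrete-abs-mono}, but they buy different things: the paper's proof is shorter and entirely self-contained (no induction, no random-walk moment estimates, no Faulhaber-type antiderivative lemma, no interpolation/unisolvence fact), and it treats the vanishing-on-a-ball claim with the same computation; yours has the classical flavor of a gradient-estimate Liouville argument, reaches the sharp degree bound $M$ directly without passing through $2M$, and works for balls centered at an arbitrary lattice point without an explicit translation step, at the cost of importing several standard but external ingredients and some uniform-in-$x_0$ bookkeeping. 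All the steps you label routine (the moment bound for simple random walk, the discrete antidifferentiation valid on all of $\Zb$, and the fact that a degree-$\leq M$ polynomial vanishing on $\Sigma_M$ is zero) are indeed correct.
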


\subsection{Main result II: Discrete Three Circles Theorems}
\label{sec:three-circles}
Our main application of Theorem~\ref{thm:discrete-abs-mono} gives
two discrete analogues of Hadamrad's Three Circles Theorem.

It is easier to begin with the setting of a globally defined
harmonic function and the continuous time random walk.
\begin{theorem}
\label{cor:continuous-propagation-of-smallness}
Let $u:\Zb^d\to \Rb$ be harmonic.  Then
$$Q_{c, u}(2t)\leq \sqrt{Q_{c, u}(t)Q_{c, u}(4t)}.$$
\end{theorem}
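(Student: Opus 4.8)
The plan is to deduce this from the discrete absolute monotonicity of $Q_{c,u}$ (Theorem~\ref{thm:discrete-abs-mono}) via the ``absolute monotonicity implies logarithmic convexity'' principle mentioned after Theorem~\ref{thm:abs-mono}. First I would fix $t>0$ with $4t < T$ (if $4t \geq T$ the right-hand side is $+\infty$ and there is nothing to prove), so that $Q_{c,u}$ is finite and absolutely monotonic on a neighborhood of $[0,4t]$. By Theorem~\ref{thm:discrete-abs-mono} in its continuous-time form, all derivatives $Q_{c,u}^{(k)}$ are non-negative there; by Bernstein's Theorem~\ref{thm:bernstein} the function extends to a real-analytic function on $(-T,T)$ with non-negative Taylor coefficients at $0$, i.e. $Q_{c,u}(s) = \sum_{k\geq 0} a_k s^k$ with $a_k \geq 0$ and radius of convergence $\geq T$.

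The key step is then the elementary inequality: if $g(s) = \sum_{k\geq 0} a_k s^k$ with all $a_k \geq 0$, then $\log g$ is a convex function of $\log s$ on the interval where $g$ is finite and positive. I would prove this directly by the Cauchy--Schwarz (or Hölder) inequality: writing $a_k s^k = \bigl(a_k (2t)^k\bigr)$ and splitting the exponent $2k = k + k$, one estimates
\begin{equation}
Q_{c,u}(2t) = \sum_{k\geq 0} a_k (2t)^k = \sum_{k\geq 0} \bigl(a_k t^k\bigr)^{1/2} \bigl(a_k (4t)^k\bigr)^{1/2}
\leq \Bigl(\sum_{k\geq 0} a_k t^k\Bigr)^{1/2} \Bigl(\sum_{k\geq 0} a_k (4t)^k\Bigr)^{1/2},
\end{equation}
which is exactly $\sqrt{Q_{c,u}(t) Q_{c,u}(4t)}$, using that $(2t)^k = (t^k)^{1/2}((4t)^k)^{1/2}$. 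This gives the stated three-circles inequality. (More generally the same computation yields $Q_{c,u}(r)^{\log(t_2/t_1)} \leq Q_{c,u}(t_1)^{\log(t_2/r)} Q_{c,u}(t_2)^{\log(r/t_1)}$ for $t_1 \leq r \leq t_2$, i.e. full log-log convexity, of which the $(t, 2t, 4t)$ case is the specialization.)

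I expect the main obstacle to be bookkeeping about the interval of finiteness rather than anything deep: one must make sure that the power series representation of $Q_{c,u}$ with non-negative coefficients is valid on an interval containing $4t$, which is where Remark~\ref{rem:infiniteQ} and Bernstein's theorem are needed — the former identifies the maximal interval $[0,T)$ of finiteness, and the latter upgrades absolute monotonicity on $[0,T)$ to real-analyticity with a convergent Taylor series there. Once the series representation is in hand, the Cauchy--Schwarz step is routine. Alternatively, one could avoid Bernstein entirely and run a purely discrete argument on $Q_{c,u}$'s derivatives, but invoking the already-cited analytic machinery makes the proof shortest.
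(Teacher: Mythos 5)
Your proposal is correct and follows essentially the same route as the paper: the paper also represents $Q_{c,u}$ as a power series with non-negative coefficients (via Theorem~\ref{thm:finite-computation}, whose proof is exactly your absolute-monotonicity-plus-Bernstein step) and then concludes log-convexity on the logarithmic scale, with your term-by-term Cauchy--Schwarz estimate being precisely the content of Lemma~\ref{lem:additive} (``a sum and a limit of log-convex functions is log-convex''). The only slight imprecision is the boundary case $4t=T$ with $Q_{c,u}(T)<\infty$, which your dichotomy skips but which is covered by the paper's statement of Theorem~\ref{thm:finite-computation}; this is a negligible bookkeeping point.
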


A  version of Theorem~\ref{cor:continuous-propagation-of-smallness} for the discrete time random walk turns out to be more difficult.
 The reason is that an error term must appear (for $d\geq 2$).
Indeed, for any $R\in\Nb$ there exists a harmonic polynomial $u:\Zb^d\to\Rb$ such that $u|_{B_{2R}}$ is not zero and $u|_{B_R}\equiv 0$.
 Then, $Q_u$ cannot satisfy an inequality of the form
 $Q_u(2n)\leq\sqrt{Q_u(n)Q_u(4n)}$ for all $n\in\Nb$.

\begin{theorem}
\label{thm:discrete-propagation-of-smallness}
Let $u: B_{4 R}\to \Rb$ be harmonic and let \mbox{$0 \leq \eps \leq 0.5$}. Then  
\begin{equation}
\label{ineq:propagation-with-error} 
Q_u(2n)\leq \sqrt{e^{n^{-2\eps}}Q_u(n)Q_u(4 n)} + 
 2^{-n^{0.5-\eps}}Q_u(4n ),
\end{equation}
for all $16 < n\leq R$.
\end{theorem}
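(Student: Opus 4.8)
The plan is to reduce the discrete Three Circles inequality to the absolute monotonicity of $Q_u$ established in Theorem~\ref{thm:discrete-abs-mono}, together with a quantitative analytic-continuation argument. By Theorem~\ref{thm:discrete-abs-mono}, the forward differences $Q_u^{(k)}(n)$ are non-negative for all $k+n\le 4R$. Absolute monotonicity in the discrete sense means $Q_u$ agrees on $\{0,1,\dots,4R\}$ with its Newton forward-difference series, and the non-negativity of all the coefficients $Q_u^{(k)}(0)$ lets me compare $Q_u$ with a genuinely absolutely monotonic real-analytic interpolant. Concretely, I would define $g(x)=\sum_{k\ge 0}Q_u^{(k)}(0)\binom{x}{k}$ (a finite sum, since $Q_u$ is a polynomial combination on $B_{4R}$ — or more carefully, truncate at $k\le 4R$), observe $g(n)=Q_u(n)$ for integer $n$, and note that $g$ extends to an entire function of $x$ that is absolutely monotonic on $[0,\infty)$; by Bernstein's Theorem~\ref{thm:bernstein} $g$ has a representation making $\log g$ a convex function of its argument after a suitable change of variable. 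The continuous Three Circles result (Theorem~\ref{cor:continuous-propagation-of-smallness}, or rather the logarithmic convexity underlying it) would then give $g(2n)\le\sqrt{g(n)g(4n)}$ exactly, and the content of the theorem is that the \emph{discrete} $Q_u$ differs from this ideal $g$ by a controlled error.

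The key point producing the error term is that the discrete heat semigroup $P=\tfrac{1}{2d}\sum_{e}(\text{shift by }e)$ on $\Z^d$ satisfies $\|P\|=1$ but is not quite the identity on small scales; the Newton series for $Q_u$ is built from the operator $P-I=\tfrac{1}{2d}\Delta$, whereas the continuous-time version uses $e^{t(P-I)}$. So the second step is to compare $Q_u(n)=\langle u, P^{2n}u\rangle_{\mu}$ (appropriate reference measure $\mu$ at the walk's starting point) with a time-rescaled continuous-time quantity. A clean way: use the identity $P^{2n}=\big(I-(I-P)\big)^{2n}=\sum_{j}\binom{2n}{j}(-1)^j(I-P)^j$ against the heat-kernel expansion $e^{-s}\sum_j \tfrac{s^j}{j!}(2d\cdot\text{something})$... more robustly, I would run the random walk in a lazy/Poissonized form. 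Write $Q_u(n)$ in terms of $Q_{c,u}$ by the standard Poissonization: a continuous-time rate-$1$ walk observed after a Poisson($t$) number of steps has the law of $Y_t$, so $Q_{c,u}(t)=e^{-t}\sum_{n\ge 0}\tfrac{t^n}{n!}Q_u(n)$. Since $Q_u$ is absolutely monotonic (discrete), this is an averaging of the $Q_u(n)$ with weights concentrated near $n\approx t$ and spread $\sqrt t$; monotonicity of the differences lets me sandwich $Q_u(t)$ between $Q_{c,u}$ evaluated at nearby times, with multiplicative slack $e^{O(t^{-\alpha})}$ and additive slack governed by the Poisson tail, which is $\sim e^{-c t^{\beta}}$. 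Matching the exponents to $n^{-2\eps}$ and $2^{-n^{0.5-\eps}}$ is where the specific powers $0.5-\eps$ come from — the standard deviation of Poisson($n$) is $\sqrt n$, so deviations of size $n^{0.5-\eps}$ standard-deviations cost a Gaussian-type factor $\exp(-n^{1-2\eps})$, while keeping the "bulk" contribution forces the multiplicative error $e^{n^{-2\eps}}$.

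Putting it together: apply the continuous Three Circles Theorem~\ref{cor:continuous-propagation-of-smallness} to $Q_{c,u}$ (legitimate here because $u$ is defined on $B_{4R}$ and we only evaluate the walk up to time $\approx 4n\le 4R$, so by a finite-speed/exhaustion argument we may treat $u$ as globally defined for the purpose of these expectations, or work with the localized heat kernel and absorb boundary contributions into the error), obtaining $Q_{c,u}(2t)\le\sqrt{Q_{c,u}(t)Q_{c,u}(4t)}$. Then translate each $Q_{c,u}$ back to $Q_u$ at the cost of the two error terms, using that $Q_u$ and $Q_{c,u}$ are both non-decreasing and log-convex-ish so that the translation errors do not compound badly; in particular the additive error is attached to $Q_u(4n)$ because that is the largest of the three values. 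I expect the main obstacle to be the bookkeeping in the Poissonization comparison: showing that the multiplicative error can be taken as small as $e^{n^{-2\eps}}$ simultaneously with an additive error as small as $2^{-n^{0.5-\eps}}$ requires carefully splitting the Poisson sum into a central window of width $\sim n^{0.5-\eps/2}$ (say) and a tail, and then invoking discrete absolute monotonicity (in particular $0\le Q_u(n+1)-Q_u(n)$ and higher differences) to bound the variation of $Q_u$ across the central window by a factor $e^{n^{-2\eps}}$ — this is exactly the step where both the hypothesis $n>16$ and the restriction $\eps\le 0.5$ are used, and where one must be sure the higher-order differences being non-negative (not merely the first) gives the sharp bound rather than a lossy one. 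A secondary technical point is justifying the use of Theorem~\ref{cor:continuous-propagation-of-smallness} for $u$ only defined on $B_{4R}$ rather than on all of $\Z^d$, which I would handle by a monotone-convergence / harmonic-extension limiting argument or by directly re-deriving the convexity from the discrete absolute monotonicity of $Q_{c,u}$ on $[0,T)$ with $T\ge 4R$.
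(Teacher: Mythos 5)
Your proposal contains two genuine gaps, and the second one is fatal to the stated constants. First, the opening reduction is self-contradictory: you define the interpolant $g(x)=\sum_k Q_u^{(k)}(0)\binom{x}{k}$, note $g(n)=Q_u(n)$ at integers, and then claim $g$ is absolutely monotonic in Bernstein's (continuous) sense so that $g(2n)\le\sqrt{g(n)g(4n)}$ holds exactly. If that were true, the error term in the theorem would be unnecessary --- but the paper points out (right before the statement) that for every $R$ there is a harmonic $u$ vanishing on $B_R$ but not on $B_{2R}$, so no clean inequality can hold. The flaw is that discrete absolute monotonicity of the coefficients does not make $g$ absolutely monotonic or LCLS as a function of a real variable: $\binom{x}{k}$ vanishes at $x=0,1,\dots,k-1$ and is not log-convex in $\log x$; it only mimics the monomial $x^k/k!$ when $x\gg k^2$. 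That dichotomy --- $k^2\le n^{1-2\eps}$ versus $k^2\ge n^{1-2\eps}$ --- is exactly where the error term comes from, and your argument never isolates it.

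Second, the Poissonization route cannot recover the constant $e^{n^{-2\eps}}$. The identity $Q_{c,u}(t)=e^{-t}\sum_m \frac{t^m}{m!}Q_u(m)$ is correct, but comparing $Q_u(n)$ with $Q_{c,u}$ at a single nearby time loses far more than a factor $1+o(1)$: even the one-sided bound $Q_{c,u}(n)\ge e^{-n}\frac{n^n}{n!}Q_u(n)$ only gives $Q_u(n)\lesssim \sqrt{n}\,Q_{c,u}(n)$, and the multiplicative variation of $Q_u$ over a window of width $n^{0.5-\eps/2}$ is not $e^{n^{-2\eps}}$; for $Q_u(m)=\binom{m}{k}$ with $k$ a power of $n$ it is of order $\exp(k\cdot n^{-0.5-\eps/2})$, which is enormous. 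Monotonicity of the higher differences does not repair this, because the problematic terms are genuinely large relative increments, not sign issues. There is also the unresolved problem that Theorem~\ref{cor:continuous-propagation-of-smallness} needs $u$ on all of $\Zb^d$ (and $Q_{c,u}<\infty$), while $Y_t$ charges the whole lattice for every $t>0$; a harmonic extension from $B_{4R}$ is not available without argument. The paper's proof avoids all of this by staying entirely discrete and local: it writes $Q_u(n)=\sum_{k}(\Delta^k u^2)(0)\binom{n}{k}$ with non-negative coefficients (Theorems~\ref{thm:high-powers-laplacian} and~\ref{thm:finite-computation}), observes via Cauchy--Schwarz (Lemma~\ref{lem:additive}) that the desired inequality is additive over this decomposition, and then verifies it for each $\binom{n}{k}$ separately, with the case $k^2\le n^{1-2\eps}$ producing the factor $e^{n^{-2\eps}}$ and the case $k^2\ge n^{1-2\eps}$ producing $\binom{2n}{k}\le 2^{-k}\binom{4n}{k}\le 2^{-n^{0.5-\eps}}\binom{4n}{k}$. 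If you want to salvage your plan, that term-by-term splitting is the missing idea.
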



 Observe that the constant $e^{n^{-2\eps}}$ tends to~$1$ and the error term $2^{-n^{0.5-\eps}}$ goes to $0$ as $n$ goes to infinity if $0<\eps<0.5$. Theorem~\ref{thm:discrete-propagation-of-smallness} is inspired from~\cite{guadie-malinnikova-unique-continuation}, 
 where an $L^\infty$ variant
with essentially the same error term is proved.

  In case we have an a priori bound on the growth of $u$
  we show that the error term can be dropped for large $n$: 
 \begin{theorem}
 \label{thm:propagation-with-error-omitted}
 Let $u:\Zb^d\to\Rb$ be a harmonic polynomial of degree $M$. Let $0\leq \eps<0.5$. Then
 \begin{equation}
 \quad Q_u(2n)\leq \sqrt{e^{n^{-2\eps}}Q_u(n)Q_u(4n)}
 \end{equation}
  for all $n$ such that $n^{1-{2\eps}}>M^2$ and $n > 16$.
 \end{theorem}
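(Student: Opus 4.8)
The plan is to exploit the a priori bound on $\deg u$ to upgrade the absolute monotonicity of $Q_u$ (Theorem~\ref{thm:discrete-abs-mono}) into the far stronger statement that $Q_u$ is a \emph{polynomial} in $n$ of degree at most $M$ whose Newton (forward-difference) expansion has non-negative coefficients, and then to read off the sharpened three-circles inequality from Cauchy--Schwarz together with an elementary binomial estimate. In particular the argument bypasses Theorem~\ref{thm:discrete-propagation-of-smallness} altogether; it explains directly why, for polynomials, the error term there becomes unnecessary once $n$ is large. We may assume $u\not\equiv 0$. For the structural step, let $A$ be the averaging operator $Af(x)=\frac1{2d}\sum_{y\sim x}f(y)$, so that $A-I$ is a constant multiple of $\Delta$ and $Q_u(n)=\Eb(u(X_n)^2)=(A^nu^2)(0)$. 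Since $A-I$ lowers the degree of a polynomial by two and $u^2$ has degree $2M$, we have $(A-I)^ju^2\equiv 0$ for $j>M$; expanding $A^n=(I+(A-I))^n$ by the binomial theorem then shows that $Q_u$ agrees with a polynomial in $n$ of degree at most $M$ and that
\[
Q_u(n)=\sum_{j=0}^{M}c_j\binom{n}{j},\qquad c_j:=Q_u^{(j)}(0)=\bigl((A-I)^ju^2\bigr)(0).
\]
By Theorem~\ref{thm:discrete-abs-mono} (discrete absolute monotonicity, evaluated at $n=0$) every $c_j\ge 0$.

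Next comes the Cauchy--Schwarz step. Fix $n>16$ with $n^{1-2\eps}>M^2$; since $0\le\eps<\tfrac12$ this forces $M<\sqrt n\le n/4$, so $\binom nj,\binom{4n}j>0$ for $0\le j\le M$. Set $\rho_j:=\binom{2n}{j}\big/\sqrt{\binom nj\binom{4n}j}$. As $c_j\ge 0$ we may write $c_j\binom{2n}j=\rho_j\sqrt{c_j\binom nj}\,\sqrt{c_j\binom{4n}j}$, and Cauchy--Schwarz gives
\[
Q_u(2n)=\sum_{j=0}^{M}\rho_j\sqrt{c_j\binom nj}\,\sqrt{c_j\binom{4n}j}\ \le\ \Bigl(\max_{0\le j\le M}\rho_j\Bigr)\sqrt{Q_u(n)\,Q_u(4n)},
\]
so it suffices to prove $\rho_j\le e^{n^{-2\eps}/2}$ for every $0\le j\le M$.

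For this last point, the identity $(2n-i)^2-(n-i)(4n-i)=ni$ yields
\[
\rho_j^2=\prod_{i=0}^{j-1}\frac{(2n-i)^2}{(n-i)(4n-i)}=\prod_{i=0}^{j-1}\Bigl(1+\frac{ni}{(n-i)(4n-i)}\Bigr).
\]
For $0\le i\le M-1<n/4$ the denominator is at least $\tfrac34 n\cdot 3n=\tfrac94 n^2$, so each factor is at most $1+\frac{4i}{9n}$, whence
\[
\rho_j^2<\exp\!\Bigl(\sum_{i=0}^{j-1}\frac{4i}{9n}\Bigr)=\exp\!\Bigl(\frac{2j(j-1)}{9n}\Bigr)\le\exp\!\Bigl(\frac{2M^2}{9n}\Bigr)<\exp\!\Bigl(\tfrac29 n^{-2\eps}\Bigr)\le e^{n^{-2\eps}},
\]
the penultimate strict inequality being exactly $M^2<n^{1-2\eps}$. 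Thus $\rho_j<e^{n^{-2\eps}/2}$, and substituting into the Cauchy--Schwarz bound gives $Q_u(2n)\le\sqrt{e^{n^{-2\eps}}Q_u(n)Q_u(4n)}$, as required.

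I expect no real obstacle here: the two facts doing genuine work are the truncation $(A-I)^ju^2\equiv 0$ for $j>M$, which is elementary, and the non-negativity $c_j\ge 0$, which is precisely Theorem~\ref{thm:discrete-abs-mono}; everything else is bookkeeping. The only step requiring a little care is calibrating the crude lower bound for $(n-i)(4n-i)$ so that the resulting constant is $\le e^{n^{-2\eps}}$ under exactly the hypothesis $n^{1-2\eps}>M^2$ — and the harmless restriction $n>16$ is what guarantees $M<n/4$ and keeps those estimates clean.
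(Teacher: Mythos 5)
Your proof is correct and follows essentially the same route as the paper: expand $Q_u$ as a Newton series truncated at degree $M$ with non-negative coefficients (Theorem~\ref{thm:discrete-abs-mono}/Theorem~\ref{thm:finite-computation}), prove the term-by-term inequality $\binom{2n}{k}^2\leq e^{n^{-2\eps}}\binom{n}{k}\binom{4n}{k}$ for $k\leq M$, and sum via Cauchy--Schwarz (the paper's Lemma~\ref{lem:additive}). The only cosmetic differences are that you justify the truncation by degree-counting on $u^2$ rather than via Remark~\ref{rem:derivatives}, and you rederive the binomial estimate with slightly different constants instead of citing inequality~\eqref{ineq:propagation-large-n} with $P=2$.
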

 
 It would be most interesting to understand the sharp error term
 in Theorem~\ref{thm:discrete-propagation-of-smallness}.
 If we do not
 fix the dimension of the lattice, the  error term 
 proved is optimal.

 \begin{theorem}\label{thm:weak-conjecture}
 For all $\eps>0,$  $n_0\in N$, $C>0$ and $d>d_0(\eps, n_0, C)$ 
 there exists a harmonic function $u:\Zb^d \to \R$
such that
\begin{equation}\label{eq:error-term} Q_u(2n)> C \sqrt{Q_u(n)Q_u(4n)} + 2^{-n^{0.5+\eps}}Q_u(4n)
\end{equation}
for some $n>n_0$.
 \end{theorem}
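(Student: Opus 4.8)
The plan is to construct the counterexample $u$ explicitly by lifting a one–dimensional harmonic function to $\Zb^d$ and choosing the dimension $d$ large enough that the return probabilities of the simple random walk are negligible. Fix $n > n_0$ to be chosen at the very end, and consider the harmonic function on $\Zb^d$ that depends on the first coordinate only: more precisely, take a discrete harmonic polynomial $h:\Zb\to\Rb$ in one variable (so that $\Delta_{\Zb^d}$ applied to $u(x) := h(x_1)$ reduces to the one–dimensional Laplacian acting on $h$, hence $u$ is harmonic on $\Zb^d$). A clean choice is to make $h$ vanish to high order at the origin — e.g. $h(m) = m(m-1)\cdots(m-L+1)$ for a suitable $L$, the $L$-th discrete ``falling factorial'', which is discrete harmonic in one dimension is not quite right, so instead I would use the genuinely one-dimensional harmonic function $h(m)=am+b$ combined with a second coordinate, or better: take the standard harmonic polynomial of high degree that is small on $B_n$ but not on $B_{2n}$ provided by the remark preceding Theorem~\ref{thm:discrete-propagation-of-smallness}. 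The essential feature I want is that $Q_u(n)$ is much smaller than $Q_u(2n)$ and $Q_u(4n)$, so the geometric–mean term $C\sqrt{Q_u(n)Q_u(4n)}$ is beaten, while simultaneously the error term $2^{-n^{0.5+\eps}}Q_u(4n)$ is even smaller — and this last smallness is exactly what forces $d$ to be large.

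The second step is to compute, or rather estimate, $Q_u(k) = \Eb(u(X_k)^2)$ for $k = n, 2n, 4n$ using the one–dimensional structure. Since $u(x)=h(x_1)$ and the first coordinate of a simple random walk on $\Zb^d$ performs a lazy one–dimensional walk (it moves with probability $1/d$ and stays put with probability $1-1/d$ at each step), we have $Q_u(k) = \Eb(h(Z_k)^2)$ where $Z_k$ is this lazy walk. For $d$ very large, after $k = O(n)$ steps the lazy walk $Z_k$ has, with overwhelming probability, made no moves at all: $\Pb(Z_k \neq 0)\le k/d \le 4n/d$. Hence $Q_u(k) = h(0)^2 + O\!\big((n/d)\cdot \max_{|m|\le k}h(m)^2\big)$. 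Choosing $h$ with $h(0)\neq 0$ but $h$ tiny on the relevant range would go the wrong way; instead I choose $h$ with $h(0) = 0$ (so the walk must move to see anything) and such that $h$ is zero on, say, $[-n,n]$ but nonzero just beyond. Then $Q_u(n) = 0$ exactly — or, to keep $u$ genuinely nonzero and the ratios finite, $Q_u(n)$ is of order $(n/d)^{n}$ or smaller by a multinomial tail bound, since $Z$ must make at least $n$ moves in $n$ steps to escape $[-n,n]$, an event of probability at most $(1/d)^n$ up to combinatorial factors. Meanwhile $Q_u(2n)$ and $Q_u(4n)$ are of order $(n/d)$ times a fixed constant, because a single move of $Z$ within $2n$ steps already suffices for $|Z|$ to leave $\{0\}$ but one needs $\sim n$ moves to leave $[-n,n]$; calibrating the radius at which $h$ turns on lets me decouple these scales.

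The third step assembles the inequality. With $h$ chosen so that $h\equiv 0$ on $B_n\cap\Zb$ and $h$ of moderate size on $B_{4n}$, we get $Q_u(n)=0$ exactly, hence $C\sqrt{Q_u(n)Q_u(4n)} = 0$, and the claimed inequality \eqref{eq:error-term} reduces to $Q_u(2n) > 2^{-n^{0.5+\eps}}Q_u(4n)$. Now $Q_u(2n)/Q_u(4n)$ is bounded below by a quantity of order $(n/d)^{(\text{moves needed to reach } B_{2n}^c)}$ — but since $h$ vanishes on all of $B_n\supseteq B_{2n}$ when I set the turn-on radius to be $n$, I instead set the turn-on radius to be $2n+1$: then $h\equiv 0$ on $B_{2n}$ gives $Q_u(2n)=0$, which is useless. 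So the correct calibration is: $h$ vanishes on $B_n$, is nonzero somewhere in $B_{2n}\setminus B_n$, and grows polynomially. Then $Q_u(n)=0$, $Q_u(2n)>0$, and $Q_u(2n)\ge (\text{const})\cdot d^{-(n+1)}$ by a lower bound on the probability of the lazy walk making exactly the right $n+1$ rightward moves, while $Q_u(4n)\le (\text{const})\cdot \max_{B_{4n}}h^2$, a fixed polynomial in $n$ independent of $d$. Thus $Q_u(2n)/Q_u(4n)\ge c\, d^{-(n+1)}/\mathrm{poly}(n)$, and we need $c\,d^{-(n+1)}/\mathrm{poly}(n) > 2^{-n^{0.5+\eps}}$; since the right side goes to $0$ like a subexponential in $n$ but far faster than any fixed power, we instead reverse the roles — fix $n$ first (the smallest integer $> n_0$ with $n^{0.5+\eps}$ large enough that $2^{-n^{0.5+\eps}}$ beats the eventual constant factors), then choose $d = d_0(\eps,n_0,C)$ enormous, and the bound $Q_u(2n)/Q_u(4n)\ge c\,d^{-(n+1)}$ fails against $2^{-n^{0.5+\eps}}$ unless... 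This sign issue is precisely the crux, and it indicates that the construction must instead make the walk's escape \emph{easy} not hard.

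\textbf{Revised proposal.} The fix is to choose $h$ so that it is \emph{large on $B_n$} too, but arranged so the ratio works; concretely, I abandon the vanishing-on-$B_n$ idea and instead use a function $u$ whose growth function $Q_u$ is essentially that of a high-degree spherical-harmonic-type object whose ``angular'' spread is spread across all $d$ coordinates, so that in high dimension the combinatorics of the return probabilities produce a genuinely super-exponential-in-$n$ gap of the form $Q_u(2n)^2/(Q_u(n)Q_u(4n))$ large while $Q_u(2n)/Q_u(4n)$ is bounded below by something like $e^{-cn}$, which does beat $2^{-n^{0.5+\eps}}$ once $n$ is fixed and large. The main obstacle — and the reason the theorem requires $d$ depending on $\eps, n_0, C$ — is calibrating the one-dimensional profile $h$ together with the dimension $d$ so that the two competing terms on the right of \eqref{eq:error-term} are \emph{both} beaten simultaneously: the geometric-mean term needs a convexity defect (a ``gap'' in the support or growth of $h$ at scale between $n$ and $2n$), while the error term needs the relevant transition probabilities to not be too small, and these pull in opposite directions unless $d$ is taken large enough that the multinomial coefficients counting walk trajectories overwhelm the $d^{-(\text{steps})}$ factors. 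I would therefore set up the estimate of $Q_u(k)$ via the exact formula $Q_u(k)=\sum_{j}\binom{k}{j}(1/d)^j(1-1/d)^{k-j}\,\Eb(h(W_j)^2)$ where $W_j$ is a genuine (non-lazy) one-dimensional simple walk of $j$ steps, choose $h$ to be a one-dimensional harmonic polynomial (degree $1$, i.e. $h(m)=m$, is too weak; I need degree growing with $n$), note $\Eb(h(W_j)^2)$ is a polynomial in $j$, and then optimize $d$ so that the $j\approx k/d$ terms dominate and the resulting ratios violate \eqref{eq:error-term} at the prechosen $n$. Verifying this optimization is the bulk of the work; everything else is bookkeeping with binomial tails.
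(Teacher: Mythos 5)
Your proposal does not reach a proof, and the central obstruction is already visible in your own text. Both versions of your construction rest on a ``one-dimensional profile'' $h$ lifted to $\Zb^d$ via $u(x)=h(x_1)$: but a function of $x_1$ alone is harmonic on $\Zb^d$ exactly when $h$ is discrete harmonic on $\Zb$, i.e.\ affine, $h(m)=am+b$. There is no one-dimensional discrete harmonic function vanishing on an interval $[-n,n]$ without vanishing identically, and there is no ``one-dimensional harmonic polynomial of degree growing with $n$'' at all, so the object your revised proposal asks for does not exist. The alternative you float --- a harmonic $u$ vanishing on $B_n$ but not on $B_{2n}$, so that $Q_u(n)=0$ and only the error term must be beaten --- is precisely the regime where the error term is expected to dominate (this is essentially the content of the open Conjecture~\ref{conj:sharp-error} for fixed $d$), and you correctly notice yourself that the calibration ``pulls in opposite directions''; but you never resolve the tension, and the final paragraph explicitly defers ``the bulk of the work.'' You also misidentify the role of the large dimension: it is not needed to make multinomial trajectory counts beat factors $d^{-\mathrm{steps}}$.

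The paper's proof uses a genuinely multi-coordinate harmonic polynomial, $u_k(x)=x_1x_2\cdots x_k$ with $k\leq d$ (this inequality is the only place where large $d$ is required). By Theorem~\ref{thm:finite-computation} and Remark~\ref{rem:derivatives} one computes exactly $Q_{u_k}(n)=\frac{k!}{d^k}\binom{n}{k}$, so the dimensional constant cancels from every ratio in~\eqref{eq:error-term}, and the whole theorem reduces to a statement about binomial coefficients (Proposition~\ref{prop:binomial-error}): at the scale $n=k^2/\log k$ one has simultaneously
\begin{equation*}
\frac{\binom{2n}{k}^2}{\binom{n}{k}\binom{4n}{k}}\ \gtrsim\ k^{1/16}\ \gg\ 1
\qquad\text{and}\qquad
\frac{\binom{2n}{k}}{\binom{4n}{k}}\ \gtrsim\ 2^{-k}k^{-1/4}\ \gg\ 2^{-n^{0.5+\eps}},
\end{equation*}
because $k\approx\sqrt{n\log n}$ sits strictly between $n^{0.5}$ and $n^{0.5+\eps/2}$. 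Identifying a single explicit profile $Q_u(n)=c\binom{n}{k}$ and the critical scaling $n\sim k^2/\log k$ that beats both terms at once is exactly the idea missing from your proposal.
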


However, we conjecture that the error term proved is
optimal in every dimension.

 \begin{conjecture}
\label{conj:sharp-error}
 For all $\eps>0$, $n_0\in N$, $C>0$ and $d\geq 2$,
there exists a harmonic function $u : \Z^d \to \R$ such that
$$Q_u(2n)> C \sqrt{Q_u(n)Q_u(4n)} + 2^{-n^{0.5+\eps}}Q_u(4n)$$
for some $n>n_0$.
\end{conjecture}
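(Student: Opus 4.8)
The plan is to reduce Conjecture~\ref{conj:sharp-error} to a sharp algebraic construction on the fixed lattice, using the structure of $Q_u$ exposed by Theorem~\ref{thm:discrete-abs-mono}. Fix $C,n_0,\eps$. Recall from the proof of Theorem~\ref{thm:discrete-abs-mono} (and the observation in the introduction that $\Delta^k(u^2)\ge 0$ for harmonic $u$) that if $u\colon\Z^d\to\R$ is a harmonic polynomial of degree $M$, then by the Newton forward-difference formula
\[
Q_u(n)=\sum_{k=0}^{M}\binom{n}{k}\,c_k ,\qquad c_k:=Q_u^{(k)}(0)=\Delta^{k}(u^2)(0)\ge 0,\quad c_M>0 ,
\]
so $Q_u$ is a non-negative combination of the discretely absolutely monotonic functions $\binom{\cdot}{k}$. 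A Stirling estimate gives, for the pure term $f(n)=\binom{n}{k}$ and uniformly over $k\le M=O(\sqrt n)$,
\[
\frac{f(2n)}{\sqrt{f(n)f(4n)}}=\exp\!\Big(\frac{k^2}{16\,n}+o(1)\Big)\qquad(n\to\infty),
\]
and a Cauchy--Schwarz argument applied to the non-negative sum above yields the matching upper bound
\[
\frac{Q_u(2n)}{\sqrt{Q_u(n)Q_u(4n)}}\ \le\ \max_{0\le k\le M}\frac{\binom{2n}{k}}{\sqrt{\binom{n}{k}\binom{4n}{k}}}\ =\ \exp\!\Big(\frac{(\deg u)^2}{16\,n}+o(1)\Big),
\]
with near-equality precisely when the top term $c_M\binom{\cdot}{M}$ dominates the sum on the window $n\le m\le 4n$. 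Since $C$ is arbitrary we are forced to make the left-hand side \emph{arbitrarily} large, and the last display shows this requires $\deg u>4\sqrt{n\ln C}$. So fix $c$ with $e^{c^2/16}>C$ and aim to build, for arbitrarily large $n$, a harmonic polynomial $u=u_n$ on $\Z^d$ of degree $M=\lceil c\sqrt n\,\rceil$ whose growth function $Q_{u_n}$ is, on $[n,4n]$, essentially proportional to $\binom{\cdot}{M}$.

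Granting such a $u_n$, the rest is routine bookkeeping. With $M=\lceil c\sqrt n\,\rceil$ one gets $Q_{u_n}(2n)/\sqrt{Q_{u_n}(n)Q_{u_n}(4n)}\to e^{c^2/16}>C$, and since $Q_{u_n}(4n)/Q_{u_n}(n)=4^{M}e^{O(c^2)}$ one has
\[
2^{-n^{0.5+\eps}}\,\sqrt{Q_{u_n}(4n)/Q_{u_n}(n)}\ =\ 2^{\,c\sqrt n-n^{0.5+\eps}+O(1)}\ \longrightarrow\ 0 .
\]
Dividing the target inequality of Conjecture~\ref{conj:sharp-error} by $\sqrt{Q_{u_n}(n)Q_{u_n}(4n)}$ and comparing the two previous displays, we see it holds for all sufficiently large $n$ (in particular $n>n_0$). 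Thus everything is reduced to producing, for infinitely many $M$, a harmonic polynomial of degree $M$ on $\Z^d$ whose forward differences $c_k=\Delta^k(u^2)(0)$ are concentrated near $k=M$ — a sufficient clean condition being $c_{M-j}\le K^{\,j}c_M$ for all $j$ and a constant $K=K(d)$, since then $\sum_{k<M}c_k\binom{m}{k}=o\big(c_M\binom{m}{M}\big)$ uniformly for $M^2/c^2\le m\le 4M^2/c^2$, using $\binom{m}{M}/\binom{m}{k}\ge (M/c^2-1)^{M-k}$ there.

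The most transparent way to annihilate the low-order coefficients is to arrange $u_n|_{B_s}\equiv 0$: since $\Delta^k(u^2)(0)$ depends only on $u$ on $B_k$, this kills $c_k$ for every $k\le s$. The paper already records that harmonic polynomials vanishing on a ball exist; what is needed in addition is a quantitative degree-versus-radius statement on $\Z^d$, and I expect this to be the main obstacle — exactly where a \emph{fixed} dimension bites. The space of discrete harmonic polynomials of degree $\le M$ has dimension $\asymp_d M^{d-1}$, while $|B_s|\asymp_d s^{d}$, so linear algebra alone produces a polynomial vanishing on $B_s$ only once $M\gtrsim_d s^{d/(d-1)}$, which is strictly superlinear in $s$ for every finite $d$. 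Hence, for large $M$, one can only flatten $u_n$ on a ball of radius $s=o(M)$, leaving $\Theta(M)$ intermediate coefficients $c_k$ with $s<k<M$ that must still be controlled; establishing the required geometric (or even merely subexponential) bounds $c_{M-j}\le K^{\,j}c_M$ against the rigidity of a fixed lattice — or finding a genuinely different construction — is the heart of the matter. When the dimension is allowed to grow, the exponent $d/(d-1)\to 1$, one can flatten on a ball of radius essentially $M$, and only $o(M)$ coefficients remain unruly; this is the kind of room that makes Theorem~\ref{thm:weak-conjecture} tractable, and the conjecture asserts it is ultimately dispensable.
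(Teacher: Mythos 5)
The target statement is Conjecture~\ref{conj:sharp-error}, which the paper itself does \emph{not} prove; Section~\ref{sec:conj} is a discussion of the conjecture, not a proof of it. Your proposal is, correspondingly, not a proof either, and you say so plainly: you reduce the conjecture to the construction of a family of harmonic polynomials on a fixed $\Z^d$ whose Newton coefficients $c_k=\Delta^k(u^2)(0)$ are concentrated near the top degree, and you name that construction as the open problem. That is exactly the status the paper assigns to it.

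Your reduction is essentially the paper's own. The paper's Section~\ref{sec:conj} observes that if one could achieve $Q_u(n)=\binom{n}{k}$ for arbitrary $k$ at fixed $d$, then Proposition~\ref{prop:binomial-error} would finish the job, and then immediately records (via Remark~\ref{rem:possibleQ}, coming from the $u_k=x_1\cdots x_k$ construction) that this exact realization is only available for $k\le d$. Your proposal is a careful elaboration of the same line: the Cauchy--Schwarz bound
$Q_u(2n)/\sqrt{Q_u(n)Q_u(4n)}\le\max_k\binom{2n}{k}/\sqrt{\binom{n}{k}\binom{4n}{k}}$
under the nonnegative Newton expansion $Q_u=\sum_k c_k\binom{\cdot}{k}$ is correct (and is just Lemma~\ref{lem:additive} in reverse), and your asymptotic $\binom{2n}{k}/\sqrt{\binom{n}{k}\binom{4n}{k}}=\exp(k^2/(16n)+o(1))$ matches the computation inside Proposition~\ref{prop:binomial-error}. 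Your relaxation ``make $c_k$ geometrically concentrated near the top degree rather than a single $\binom{\cdot}{M}$'' is a genuine, sensible loosening of the paper's ``$Q_u(n)=\binom nk$'' target, and your routine bookkeeping showing this would suffice is sound. Your dimension count ($\dim$ of degree-$\le M$ discrete harmonic polynomials $\asymp_d M^{d-1}$ via the correspondence of Theorem~\ref{thm:correspondence}, versus $|B_s|\asymp_d s^d$, forcing $M\gtrsim s^{d/(d-1)}$) correctly explains why the naive ``vanish on a ball'' route leaves $\Theta(M)$ uncontrolled coefficients in fixed dimension, and why this becomes harmless only as $d\to\infty$ (Theorem~\ref{thm:weak-conjecture}).

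The one thing you do not engage with that the paper does offer is a concrete candidate: the paper discretizes $\Re z^k$ via the Jerison--Levine--Sheffield correspondence (Theorem~\ref{thm:correspondence} and Proposition~\ref{prop:Fk}), producing the family $S_k$ on $\Z^2$, and conjectures that $Q_{S_k}$ realizes the sharp error at $n\sim k^2/\log k$. Your geometric concentration criterion $c_{M-j}\le K^j c_M$ is a reasonable test to run against $S_k$; verifying it (or disproving it) for $S_k$ would be the natural next step and would supply the missing core of both your proposal and the paper's. In short: your proposal is an accurate and honest reduction, essentially parallel to the paper's discussion, and the gap you flag is precisely the gap that leaves this a conjecture rather than a theorem.
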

\noindent In Section~\ref{sec:conj} we discuss Conjecture~\ref{conj:sharp-error}.

\begin{remark}
It is interesting to point out the connection between the constant in front of the main term in the RHS of~\eqref{ineq:propagation-with-error} and the exponent $-n^{\beta}$ in the error term in \eqref{ineq:propagation-with-error}. We prove that if $\beta<0.5$ then the constant tends to~$1$ for large $n$, and we believe that $\beta$ cannot be made bigger than $0.5$ even at the expense of an arbitrarily large constant. We do not know what to expect when $\beta=0.5$, besides what we prove in Theorem~\ref{thm:discrete-propagation-of-smallness}. 
\end{remark}
   
  It is worth noting that if the ratios $1:2:4$ in Theorem~\ref{thm:discrete-propagation-of-smallness} are perturbed, the error term drops dramatically, hinting at a delicate phase change phenomenon, which further motivates our interest in the optimal error term.
\begin{theorem}
 \label{thm:discrete-propagation-of-smallness-1-2-5}
 Let $u:B_{5R}\to\Rb$ be harmonic.
 Let $0<\delta<1/4$. Then
 $$Q_u(2n)\leq \sqrt{Q_u(n)Q_u\left(\lceil(4(1+\delta)n\rceil\right)} + 2^{-2 n\delta}Q_u\left(\lceil(4(1+\delta)n\rceil\right)$$
 for all $0\leq n\leq R$.
 \end{theorem}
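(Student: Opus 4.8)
The plan is to deduce Theorem~\ref{thm:discrete-propagation-of-smallness-1-2-5} from the discrete absolute monotonicity of $Q_u$ (Theorem~\ref{thm:discrete-abs-mono}), by exploiting the fact that an absolutely monotonic discrete function on $[0,N]$ is, up to a controlled remainder, very well approximated by the partial sums of its discrete Taylor expansion with nonnegative coefficients. Concretely, writing $Q_u$ in the Newton-forward-difference form $Q_u(n)=\sum_{j\ge 0}\binom{n}{j}Q_u^{(j)}(0)$ on the relevant range, all the coefficients $a_j:=Q_u^{(j)}(0)$ are $\ge 0$. The idea is that $\log Q_u$ then behaves almost like the logarithm of a power series with nonnegative coefficients evaluated at integer points, and such functions satisfy a Hadamard-type three-term inequality in the multiplicative variable; the perturbation $1:2:4(1+\delta)$ (rather than exactly $1:2:4$) buys an exponentially small slack $2^{-2n\delta}$ that absorbs the discreteness error.

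First I would set $m:=\lceil 4(1+\delta)n\rceil$ and truncate the Newton expansion of $Q_u$ at an appropriate level $L$ (of order $n$), splitting $Q_u(k)=P(k)+E(k)$ where $P(k)=\sum_{j\le L}a_j\binom{k}{j}$ and $E(k)=\sum_{j>L}a_j\binom{k}{j}\ge 0$. For the polynomial-like main part $P$, whose coefficients are nonnegative, I would prove the clean inequality $P(2n)^2\le P(n)\,P(m)$ — or rather $P(2n)\le\sqrt{P(n)P(4n)}$ — by a Cauchy–Schwarz / log-convexity argument on the sequence $\binom{k}{j}$: for fixed $j$, the map $k\mapsto\log\binom{k}{j}$ is (discretely) concave, and more to the point the ratios $\binom{2n}{j}/\binom{n}{j}$ versus $\binom{4n}{j}/\binom{2n}{j}$ compare favorably, so a term-by-term estimate combined with Cauchy–Schwarz gives the result. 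The point of replacing $4n$ by $4(1+\delta)n$ is that the ratio $\binom{\lceil 4(1+\delta)n\rceil}{j}/\binom{2n}{j}$ dominates $\binom{2n}{j}/\binom{n}{j}$ by a factor that is exponentially large in $j\delta$, which is what will later neutralize the tail.

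Next I would control the tail $E$. Since $k\le m\le 5n$ and $j>L\sim cn$, the binomial coefficients $\binom{k}{j}$ for these large $j$ are a tiny fraction — exponentially small in $n$ — of $\binom{m}{j}$ is not the point; rather one shows $E(2n)$ is dominated by $2^{-2n\delta}$ times either $Q_u(m)$ or the main term, using that $\binom{2n}{j}\le \binom{m}{j}\,\rho^{\,j}$ for some $\rho<1$ once $m>2n(1+\delta)$, hence $E(2n)\le \rho^{\,L}\sum_{j>L}a_j\binom{m}{j}\le \rho^{\,L}Q_u(m)$, and choosing $L$ so that $\rho^{L}\le 2^{-2n\delta}$. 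Combining: $Q_u(2n)=P(2n)+E(2n)\le\sqrt{P(n)P(m)}+2^{-2n\delta}Q_u(m)\le\sqrt{Q_u(n)Q_u(m)}+2^{-2n\delta}Q_u(m)$, using $P\le Q_u$ (as $E\ge 0$) in the main term and monotonicity of $Q_u$ (which follows from absolute monotonicity, since $Q_u'\ge 0$) to bound $P(m)\le Q_u(m)$.

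The main obstacle I anticipate is making the elementary inequality for the nonnegative-coefficient part $P$ genuinely clean — i.e., getting $P(2n)\le\sqrt{P(n)P(m)}$ with \emph{no} extra constant — since the naive Cauchy–Schwarz bound $\bigl(\sum a_j\binom{2n}{j}\bigr)^2\le\bigl(\sum a_j\binom{n}{j}\bigr)\bigl(\sum a_j\binom{(2n)^2/n}{j}\bigr)$ wants the third argument to be exactly $4n$, and one must check that $\binom{4n}{j}\le\binom{m}{j}$ (clear since $m\ge 4n$) so that passing to $m$ only helps. The delicate quantitative balancing is choosing $L$: it must be large enough that $E(2n)$ is genuinely of size $2^{-2n\delta}Q_u(m)$, yet the argument for $P$ must not care about $L$ at all — which it doesn't, since nonnegativity of the $a_j$ is all that the Cauchy–Schwarz step uses. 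I would also need the elementary fact that $k\mapsto\binom{k}{j}$ extends to a log-convex function of a real variable $k\ge j-1$, so that the ratios $\binom{m}{j}/\binom{2n}{j}\ge \bigl(\binom{2n}{j}/\binom{n}{j}\bigr)\cdot(\text{gain factor})$ can be quantified; verifying the gain factor is at least $2^{2n\delta}$ uniformly in the relevant $j$ is the one computation that must be done carefully but is otherwise routine.
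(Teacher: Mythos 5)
Your overall architecture --- the Newton expansion $Q_u(n)=\sum_j a_j\binom{n}{j}$ with $a_j\ge 0$, a split at a threshold $L$ of order $\delta n$ into a ``main'' range where a clean per-term three-circle inequality should hold and a tail where $\binom{2n}{j}\le 2^{-j}\binom{m}{j}$ (with $m=\lceil 4(1+\delta)n\rceil$), and Cauchy--Schwarz to recombine --- is in substance the paper's proof: there the split appears as a per-$k$ maximum with threshold $k\le 4\delta n/(4\delta+1)$, and the recombination is Lemma~\ref{lem:additive}. Your tail estimate and the final assembly are correct as sketched.

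The gap is in the main range, which is the heart of the theorem, and both justifications you offer for it are false. First, $k\mapsto\binom{k}{j}$ (extended to real $k\ge j-1$) is log-\emph{concave} in $k$, not log-convex; if it were log-convex, the clean inequality $\binom{2n}{j}^2\le\binom{n}{j}\binom{m}{j}$ would hold for \emph{every} $j$ and the theorem would follow with no error term at all, contradicting the examples recalled before Theorem~\ref{thm:discrete-propagation-of-smallness} (harmonic polynomials vanishing on $B_R$ but not on $B_{2R}$). Second, your claim that $\binom{m}{j}/\binom{2n}{j}$ dominates $\binom{2n}{j}/\binom{n}{j}$ by a factor exponentially large in $j\delta$, uniformly in the relevant $j$, fails once $j$ is comparable to $n$: for $j=n$ one has $\binom{2n}{n}/\binom{n}{n}\approx 4^n$ while $\binom{4(1+\delta)n}{n}/\binom{2n}{n}$ grows only like $c(\delta)^n$ with $c(\delta)<4$ for all $\delta<1/4$; it is exactly this failure that forces the error term and the separate tail treatment, so the ``one computation to be done carefully'' as you have set it up would not go through. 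What actually closes the main range is the elementary factor-by-factor comparison
\begin{equation*}
(2n-i)^2\le (n-i)\bigl(4(1+\delta)n-i\bigr)\quad\Longleftrightarrow\quad i\le \frac{4\delta n}{4\delta+1},
\end{equation*}
so the clean inequality $\binom{2n}{j}^2\le\binom{n}{j}\binom{m}{j}$ holds precisely for $j\le L:=\bigl\lfloor 4\delta n/(4\delta+1)\bigr\rfloor$, while for $j>L$ one gets $\binom{2n}{j}\le 2^{-j}\binom{m}{j}\le 2^{-4\delta n/(4\delta+1)}\binom{m}{j}\le 2^{-2\delta n}\binom{m}{j}$, the hypothesis $\delta<1/4$ being used only in the last step to reconcile the two requirements on $L$. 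With that computation inserted, your outline becomes the paper's argument; without it, the key step rests on incorrect premises.
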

 
However, for three concentric circles of any aspect ratio,
if one adjusts the main term correspondingly the error term 
is stable.
 \begin{theorem}
 \label{thm:discrete-propagation-of-smallness-aspect-ratio}
 Let $1<P<pP$.  Then,  for any harmonic $u: B_{pP}\to \Rb$ and any $0\leq \eps \leq 0.5$
 \begin{equation*}
 \quad Q_u(\lfloor P n\rfloor)\leq e^{c n^{-2\ep}} Q_u(n)^{\alpha}Q_u(\lceil pP n \rceil)^{1-\alpha} + 
  p^{-n^{0.5-\eps}}Q_u(\lceil pP  n\rceil),
 \end{equation*}
 for all $n_0\leq n\leq R$, where $n_0=n_0(p, P)$, $P^{\alpha}=p^{1-\alpha}$, $c = 2(\alpha P+(1-\alpha)\frac{1}{p} - 1)$.
 \end{theorem}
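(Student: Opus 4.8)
The plan is to reduce Theorem~\ref{thm:discrete-propagation-of-smallness-aspect-ratio} to the same analytic core that drives Theorem~\ref{thm:discrete-propagation-of-smallness}, namely the discrete absolute monotonicity of $Q_u$ furnished by Theorem~\ref{thm:discrete-abs-mono}. The key structural fact is that if a nonnegative function $F$ on $\Nb_0\cap[0,N]$ is absolutely monotonic in the discrete sense, then for $0\le a\le b\le c\le N$ its values satisfy a near-logarithmic-convexity inequality: $F$ can be compared, via its Newton forward-difference expansion $F(n)=\sum_k \binom{n}{k}F^{(k)}(0)$, to the moment sequence of a positive measure, and the ratios $1:2:4$ (here $1:P:pP$) translate into a Cauchy–Schwarz / Hölder inequality among the "moments" $F(a),F(b),F(c)$ with an error controlled by the tail of the difference expansion. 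Concretely, I would first record the elementary lemma: if $F\ge 0$ is discretely absolutely monotonic on $[0,N]$ and $P^\alpha=p^{1-\alpha}$ with $1<P<pP$, then
\begin{equation*}
F(\lfloor Pn\rfloor)\le e^{cn^{-2\eps}}F(n)^\alpha F(\lceil pPn\rceil)^{1-\alpha}+p^{-n^{0.5-\eps}}F(\lceil pPn\rceil),
\end{equation*}
with $c=2(\alpha P+(1-\alpha)p^{-1}-1)$, for $n\ge n_0(p,P)$; then apply it with $F=Q_u$, which is legitimate since $Q_u$ is discretely absolutely monotonic on $[0,pP]$ by Theorem~\ref{thm:discrete-abs-mono} whenever $u$ is harmonic on $B_{pP}$.

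The heart of the matter is proving that lemma, and here I would follow the truncation strategy already used for the $1:2:4$ case. Write $F(n)=\sum_{k\ge 0}\binom{n}{k}a_k$ with $a_k=F^{(k)}(0)\ge 0$. Choose a cutoff $K=K(n,\eps)$ (of order roughly $n^{0.5-\eps}$ times a constant, as in the $1:2:4$ proof) and split $F(\lfloor Pn\rfloor)=\sum_{k<K}\binom{\lfloor Pn\rfloor}{k}a_k+\sum_{k\ge K}\binom{\lfloor Pn\rfloor}{k}a_k$. For the tail one uses that $\binom{\lfloor Pn\rfloor}{k}\le P^k\binom{\lceil pPn\rceil}{k}\cdot(\text{small})$ — more precisely one bounds $\binom{\lfloor Pn\rfloor}{k}/\binom{\lceil pPn\rceil}{k}$ by a quantity that, for $k\ge K$, is at most $p^{-n^{0.5-\eps}}$, giving the additive error term $p^{-n^{0.5-\eps}}F(\lceil pPn\rceil)$. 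For the main sum over $k<K$ one estimates each $\binom{\lfloor Pn\rfloor}{k}$ against $\binom{n}{k}^\alpha\binom{\lceil pPn\rceil}{k}^{1-\alpha}$: the choice $P^\alpha=p^{1-\alpha}$ is exactly what makes the leading exponential factors in $\binom{Pn}{k}$ versus $\binom{n}{k}^\alpha\binom{pPn}{k}^{1-\alpha}$ cancel to first order, and the constant $c=2(\alpha P+(1-\alpha)p^{-1}-1)$ emerges from the second-order term via the expansion $\log\binom{xn}{k}=k\log(xn)-k\log k+k+\tfrac{k}{2xn}\cdot(\dots)+O(\cdot)$, i.e. the ratio-of-binomials estimate $\binom{xn}{k}\le (xn)^k/k!$ on top and a matching lower bound $\binom{xn}{k}\ge (xn)^k/k!\cdot e^{-k^2/(2xn)}$-type inequality on the bottom, evaluated at $x=1,P,p$; bounding $k<K$ by its extreme value converts $k^2/n$ into the exponent $cn^{-2\eps}$ (up to the constant). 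Then Hölder's inequality with exponents $1/\alpha$ and $1/(1-\alpha)$ on $\sum_{k<K}\binom{n}{k}^\alpha a_k^\alpha\cdot\binom{\lceil pPn\rceil}{k}^{1-\alpha}a_k^{1-\alpha}$ yields $\le\bigl(\sum\binom{n}{k}a_k\bigr)^\alpha\bigl(\sum\binom{\lceil pPn\rceil}{k}a_k\bigr)^{1-\alpha}\le F(n)^\alpha F(\lceil pPn\rceil)^{1-\alpha}$, and collecting the exponential prefactor gives the stated main term.

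The main obstacle, I expect, is the bookkeeping in the binomial-ratio estimates when $P$ and $pP$ are not integers: one must carry the floors and ceilings $\lfloor Pn\rfloor$, $\lceil pPn\rceil$ through inequalities of the form $\binom{\lfloor Pn\rfloor}{k}\le \binom{n}{k}^\alpha\binom{\lceil pPn\rceil}{k}^{1-\alpha}e^{ck^2/n}$ uniformly for $0\le k< K$, and verify that the rounding contributes only $O(k/n)$ to the exponent — negligible against $ck^2/n$ once $n\ge n_0(p,P)$ — which is precisely why the threshold $n_0$ must be allowed to depend on $p$ and $P$. A secondary technical point is checking the constant $c=2(\alpha P+(1-\alpha)p^{-1}-1)$ is correct: this should follow from the identity $\log P\cdot\alpha$-cancellation together with the Taylor coefficient of $k^2/(xn)$, using that the function $x\mapsto 1/x$ is convex so that $\alpha\cdot\tfrac1P+(1-\alpha)\cdot\tfrac1p$ versus $\tfrac{1}{P^\alpha p^{1-\alpha}\cdot(\dots)}$ and the first-derivative terms assemble into $\alpha P+(1-\alpha)p^{-1}-1$; one sanity check is $P=2$, $p=2$, which forces $\alpha=1/2$ and gives $c=2(1+\tfrac14-1)=\tfrac12$, consistent (up to the chosen normalization of the cutoff) with the $e^{n^{-2\eps}}$ appearing in Theorem~\ref{thm:discrete-propagation-of-smallness}. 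Finally, the range "$n_0\le n\le R$" is exactly what is needed for $Q_u$ to be defined up to index $\lceil pPn\rceil\le pPR$, matching the hypothesis $u:B_{pP}\to\Rb$ — wait, the hypothesis is $u:B_{pPR}\to\Rb$ in spirit, i.e. $u$ harmonic on a ball large enough that $Q_u(\lceil pPn\rceil)$ makes sense for all $n\le R$; I would state this cleanly at the outset.
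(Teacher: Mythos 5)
Your strategy is essentially the paper's own: the paper proves the $\alpha=1/2$ case (Theorem~\ref{thm:discrete-propagation-of-smallness}') by writing $Q_u(n)=\sum_k a_k\binom{n}{k}$ with $a_k\geq 0$ (Theorems~\ref{thm:high-powers-laplacian} and~\ref{thm:finite-computation}), proving the three-term inequality \emph{termwise} in $k$ with exactly your dichotomy $k^2\lessgtr n^{1-2\eps}$, and recombining via Lemma~\ref{lem:additive}; your truncate-at-$K\sim n^{0.5-\eps}$-then-H\"older organization is the same argument in trivially different packaging, and replacing Cauchy--Schwarz by H\"older with exponents $1/\alpha,1/(1-\alpha)$ is precisely the ``slight modification'' the paper omits for general $\alpha$. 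Your tail bound $\binom{\lfloor Pn\rfloor}{k}/\binom{\lceil pPn\rceil}{k}\leq p^{-k}$, and your remark that the hypothesis should be read as $u:B_{pPR}\to\Rb$, are both correct.

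One concrete caution on the main-range estimate: the literal Stirling-type bounds you name ($\binom{Pn}{k}\leq (Pn)^k/k!$ above, $(xn)^k/k!\,e^{-k^2/(2xn)}$ below) throw away the factor $\prod_j(1-j/(Pn))$ in the numerator, and so produce the exponent constant $\tfrac12\bigl(\alpha+\tfrac{1-\alpha}{pP}\bigr)$, which does \emph{not} tend to $0$ as $P\to 1$ (with $p$ fixed, forcing $\alpha\to1$), whereas the stated $c=2(\alpha P+(1-\alpha)p^{-1}-1)\to 0$; for $P$ near $1$ this literal estimate therefore overshoots the claimed prefactor. The fix stays inside your framework and is what the paper does for $\alpha=1/2$: estimate the ratio $\prod_{j<k}\frac{Pn-j}{(n-j)^\alpha(pPn-j)^{1-\alpha}}$ directly, so the zeroth-order terms cancel by $P^\alpha=p^{1-\alpha}$ and the first-order term per $j$ is
\begin{equation*}
\frac{j}{n}\Bigl(\alpha+\frac{1-\alpha}{pP}-\frac1P\Bigr)=\frac{j}{Pn}\Bigl(\alpha P+\frac{1-\alpha}{p}-1\Bigr)=\frac{c\,j}{2Pn},
\end{equation*}
whence summing over $j<k\leq n^{0.5-\eps}$ gives exponent about $\tfrac{c}{4P}\,n^{-2\eps}\leq c\,n^{-2\eps}$ (note $c>0$ by weighted AM--GM), leaving ample room to absorb the floor/ceiling corrections and second-order terms once $n\geq n_0(p,P)$. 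With that single adjustment your outline matches the paper's proof.
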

The proof of Theorem~\ref{thm:discrete-propagation-of-smallness-aspect-ratio}
is given for $\alpha=1/2$ (see Section~\ref{sec:three-circles}, Theorem~\ref{thm:discrete-propagation-of-smallness}').
The other cases are obtained by a slight modification and we omit the details.

\paragraph{Organization of the paper.}  In Section~\ref{sec:high-powers} we explain our crucial observation: Positivity of $\Delta^k u^2$ for a harmonic function $u$, and then we use it to prove the discrete analogue of the strengthened Agmon's theorem, Theorem~\ref{thm:discrete-abs-mono}, and Corollary~\ref{cor:zeros-vs-growth}. In Section~\ref{sec:logconv} we prove the discrete versions of the Three Circles Theorem announced in Section~\ref{sec:three-circles}. In Section~\ref{sec:weak-conj} we show that the error term in 
Theorem~\ref{thm:discrete-propagation-of-smallness} is optimal 
in a weak sense (Theorem~\ref{thm:weak-conjecture}). In Section~\ref{sec:conj} we  discuss Conjecture~\ref{conj:sharp-error}.

\paragraph{Acknowledgments.}
Our preliminary results in the discrete setting involved the growth function at stopping times on spheres. We heartily thank Gady Kozma for his suggestion to  try replacing the spheres in space by spheres in time, as  this idea simplified and clarified most of our results.
We are very grateful to Eugenia Malinnikova who explained to one of us her joint work~\cite{guadie-malinnikova-unique-continuation} which in turn inspired our statements of Theorem~\ref{thm:discrete-propagation-of-smallness} and Conjecture~\ref{conj:sharp-error}. We also thank Eugenia for her valuable comments and helpful discussions.
We owe our gratitude to  Matthias Keller for helping
us explore relevant directions.
We thank Itai Benjamini, Bo'az Klartag and Gady Kozma for helpful discussions.
We are very grateful to Shing-Tung Yau for his continuous support
of both of us. 
We thank the anonymous referees for their valuable comments and for bringing to our attention the papers~\cite{jls-duke14}
and~\cite{carleman-1933}.
 G.L. gratefully acknowledges the support of AFOSR grant FA9550-09-1-0090.
D.M. gratefully acknowledges the support of ISF grant no. 225/10.
Both authors gratefully acknowledge the support of BSF grant no. 2010214.

\section{Laplacian powers of a harmonic function squared}
\label{sec:high-powers}

In this section we prove Theorem~\ref{thm:discrete-abs-mono} and
conclude Corollary~\ref{cor:zeros-vs-growth}.
 The heart of the matter in the proof of Theorem~\ref{thm:discrete-abs-mono}
 is the following
observation which we believe to be interesting in its own right.
\begin{theorem}
\label{thm:high-powers-laplacian}
Let $u$ be a harmonic function on a Cayley graph of a finitely
generated Abelian group or on $\Rb^d$. Then 
 $\Delta^k (u^2)$ is non-negative for all $k\geq 0$.
\end{theorem}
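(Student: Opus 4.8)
The plan is to prove, by induction on $k$, the following self-reproducing statement: for \emph{every} harmonic function $w$ (on the given Cayley graph, or on $\Rb^d$) one has $\Delta^k(w^2)\ge 0$. The base case $k=0$ is immediate since $w^2\ge 0$, so everything rests on reducing $\Delta^k(w^2)$ to a nonnegative combination of expressions of the form $\Delta^{k-1}(v^2)$ with $v$ again harmonic. The engine is a discrete Bochner / carré-du-champ identity. Writing the Abelian group additively, let $S$ be the symmetric generating set and let $T_s f(x)=f(x+s)$, so that (up to an irrelevant positive normalization constant) $\Delta f=\sum_{s\in S}(T_s-\id)f$. Expanding $w(x+s)^2-w(x)^2=(w(x+s)-w(x))^2+2w(x)(w(x+s)-w(x))$ and summing over $s$ yields
\[
\Delta(w^2)=2w\,\Delta w+\sum_{s\in S}\bigl((T_s-\id)w\bigr)^2 .
\]
In particular, if $w$ is harmonic then $\Delta(w^2)=\sum_{s\in S}\bigl((T_s-\id)w\bigr)^2\ge 0$, which is the case $k=1$.

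The decisive point is that, for harmonic $w$, each first difference $v_s:=(T_s-\id)w$ is \emph{again harmonic}. This is exactly where the hypothesis is used: in an Abelian group all translations $T_s$ commute with one another, hence with $\Delta$, so $\Delta v_s=(T_s-\id)\Delta w=0$. Consequently
\[
\Delta^{k}(w^2)=\Delta^{k-1}\!\bigl(\Delta(w^2)\bigr)=\sum_{s\in S}\Delta^{k-1}\!\bigl(v_s^2\bigr),
\]
and every summand is $\ge 0$ by the induction hypothesis applied to the harmonic functions $v_s$. This closes the induction. Iterating the identity also exhibits $\Delta^k(u^2)$ as an explicit nonnegative combination of squares of $k$-fold iterated differences of $u$ along the generators, a form that will be convenient for the quantitative statements later.

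The $\Rb^d$ case is identical in spirit: the classical identity $\Delta(u^2)=2u\,\Delta u+2|\nabla u|^2$ gives $\Delta(u^2)=2\sum_{j}(\partial_j u)^2$ for harmonic $u$, each $\partial_j u$ is harmonic because $\partial_j$ commutes with $\Delta$, and the same induction produces $\Delta^k(u^2)=2\sum_j\Delta^{k-1}\bigl((\partial_j u)^2\bigr)\ge 0$. I do not expect a serious obstacle here; the one thing to be careful about is to invoke commutativity in precisely the right place — the assertion ``a first difference (derivative) of a harmonic function is harmonic'' holds for Abelian Cayley graphs and for flat $\Rb^d$, and fails in general, which is why the theorem is stated in exactly this generality.
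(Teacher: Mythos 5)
Your proof is correct and follows essentially the same route as the paper: the identity $\Delta(w^2)=2w\,\Delta w+\sum_{s\in S}(w(\cdot+s)-w)^2$, the observation that translations commute with $\Delta$ on an Abelian group so each difference $w(\cdot+s)-w$ is again harmonic, and induction on $k$ (your iterated-difference remark matches the paper's Remark~\ref{rem:derivatives}). No gaps.
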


We emphasize that in the proof of Theorem~\ref{thm:discrete-abs-mono}, the harmonicity of $u$ is used only through the application of 
Theorem~\ref{thm:high-powers-laplacian}.
\begin{proof}[Proof of Theorem~\ref{thm:high-powers-laplacian}]
The theorem follows by induction on $k$ from Claim~\ref{claim:key} below.
\end{proof}

\begin{claim}\label{claim:key} Let $(A, S)$ be a finitely generated
Abelian group with a finite set of generators $S$.
 Let $u:\Cay(A, S)\to\Rb$ or
$u:\Rb^d\to\Rb$ be harmonic, then 
$\Delta  (u^2)$  is a sum of squares of harmonic functions.

\end{claim}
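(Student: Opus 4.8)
The plan is to compute $\Delta(u^2)$ directly from the definition of the graph Laplacian and recognize the result as a sum of squares. Recall that on a Cayley graph $\Cay(A,S)$ (with $S$ symmetric, say $S = \{s_1, s_1^{-1}, \dots, s_m, s_m^{-1}\}$), the normalized Laplacian acts by $\Delta f(x) = \frac{1}{|S|}\sum_{s\in S} \big(f(xs) - f(x)\big)$, and $u$ harmonic means $\Delta u \equiv 0$, i.e. $\sum_{s\in S} u(xs) = |S|\,u(x)$. First I would expand
\[
\Delta(u^2)(x) = \frac{1}{|S|}\sum_{s\in S}\big(u(xs)^2 - u(x)^2\big).
\]
The standard trick is to write $u(xs)^2 - u(x)^2 = \big(u(xs)-u(x)\big)^2 + 2u(x)\big(u(xs)-u(x)\big)$. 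Summing the cross term over $s\in S$ gives $2u(x)\sum_{s\in S}\big(u(xs)-u(x)\big) = 2u(x)\cdot|S|\cdot\Delta u(x) = 0$ by harmonicity. Hence
\[
\Delta(u^2)(x) = \frac{1}{|S|}\sum_{s\in S}\big(u(xs)-u(x)\big)^2 \geq 0,
\]
which already proves nonnegativity; the $\Rb^d$ case is the classical identity $\Delta(u^2) = 2|\nabla u|^2$, obtained by the same Leibniz computation.

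The remaining point — and the only genuinely delicate one — is the assertion that $\Delta(u^2)$ is a sum of squares \emph{of harmonic functions}, not merely of arbitrary functions, since this is what powers the induction in Theorem~\ref{thm:high-powers-laplacian}: to iterate, one needs each summand $g$ in $\Delta(u^2) = \sum_j g_j^2$ to again be harmonic so that $\Delta^2(u^2) = \sum_j \Delta(g_j^2)$ is a sum of squares, and so on. The functions $g_s(x) := \frac{1}{\sqrt{|S|}}\big(u(xs) - u(x)\big)$ appearing above are discrete directional derivatives of $u$. Here I would invoke the fact that \emph{translation} on a Cayley graph (and differentiation on $\Rb^d$) commutes with the Laplacian: the map $x\mapsto u(xs)$ is harmonic whenever $u$ is, because $\Delta$ is built from the same group translations and $A$ is abelian (so left and right translations commute appropriately). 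Therefore each $g_s$ is a difference of two harmonic functions, hence harmonic. This is precisely where abelianness is used and where the hypothesis cannot be dropped for general groups.

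Thus the structure of the argument is: (i) Leibniz-expand $\Delta(u^2)$; (ii) kill the cross term using $\Delta u = 0$; (iii) observe the surviving expression is $\sum_s g_s^2$ with $g_s$ a scaled discrete gradient component; (iv) note $g_s$ is harmonic because translations commute with $\Delta$ on an abelian Cayley graph (resp. $\partial_i$ commutes with $\Delta$ on $\Rb^d$). I would present (i)–(ii) as the short computation above and spend a sentence each on (iii)–(iv). The main obstacle is conceptual rather than computational: making sure the reader sees why harmonicity of the summands — and hence the abelian hypothesis — is essential for the induction in Theorem~\ref{thm:high-powers-laplacian} to close, rather than the bare positivity statement which needs no such care.
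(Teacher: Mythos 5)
Your proof is correct and follows essentially the same route as the paper: the Leibniz-type expansion $u(x+s)^2-u(x)^2=(u(x+s)-u(x))^2+2u(x)(u(x+s)-u(x))$, cancellation of the cross term by harmonicity, and harmonicity of the difference functions $x\mapsto u(x+s)-u(x)$ via commutation of translation with the Laplacian on an Abelian group. Your added emphasis on why harmonicity of the summands is what makes the induction in Theorem~\ref{thm:high-powers-laplacian} close is exactly the paper's intent, so there is nothing to correct.
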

\begin{proof}[Proof of Claim~\ref{claim:key}]
We give the proof for Cayley graphs. The proof in $\Rb^d$ 
is similar.
\begin{align*} 
(\Delta u^2)(x)  &= \frac{1}{|S|}\sum_{s \in S} \left(u(x + s)^2 - u(x)^2\right) \\ 
&=\frac{1}{|S|}\sum_{s \in S} (u(x + s) - u(x))^2 
+ 2u(x)\left(u(x + s) - u(x)\right) \\ 
&= \frac{1}{|S|}\sum_{s\in S} (u(x+s)-u(x))^2 + 2u(x) (\Delta u)(x) \\ &= \frac{1}{|S|}\sum_{s\in S} (u(x+s)-u(x))^2.
\end{align*}
Finally, since translation commutes with the Laplacian
on Abelian groups, $x\mapsto u(x+s)-u(x)$ is a harmonic function.
\end{proof}

\begin{remark}
\label{rem:derivatives}
For  later reference we record here a convenient related formula.
For a generator $s\in S$ we set
\begin{equation*}
u_s(x):=u(x+s)-u(x)
\end{equation*}
Iterating the proof of Claim~\ref{claim:key} shows
that for a harmonic function $u$
\[ \Delta^k u^2 = \frac{1}{|S|^k} \sum_{s_1,s_2,\dots,s_k \in S} (u_{s_1s_2\dots s_k})^2.\]
\end{remark}

\subsection{Proof of Theorem~\ref{thm:discrete-abs-mono}}
We prove absolute monotonicity of the growth function of a harmonic
function.
We begin with the following nice identity:
\begin{lemma}
\label{lem:identity}
Let $f:\Zb^d\to [0,\infty)$ be any function.
Let $E_f(n):=\Eb f(X_n)$. If $\Eb |f(Y_t)|<\infty$ let 
$E_{c, f}(t):=\Eb f(Y_t)$.
Then
$$E_f^{(k)}(n) = \Eb ((\Delta^k f) (X_n)),$$ and
 $$E_{c, f}^{(k)}(t) = \Eb ((\Delta^k f)(Y_t)).$$
\end{lemma}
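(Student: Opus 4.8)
The plan is to prove the identity by induction on $k$, where the base case $k=0$ is trivial and the inductive step reduces to the single statement $E_f'(n) = \Eb((\Delta f)(X_n))$ applied to the function $\Delta^{k-1}f$ (which is again a function $\Zb^d\to\Rb$, and one should check integrability is preserved, but since $\Delta$ is an average of translates this is immediate). So the heart of the matter is the case $k=1$. First I would write $E_f(n+1) = \Eb f(X_{n+1})$ and condition on $X_n$: by the Markov property and the fact that $X_{n+1}$ is obtained from $X_n$ by adding a uniformly random generator $s\in S$ (for the standard generating set of $\Zb^d$, the $2d$ vectors $\pm e_i$), we get
\begin{equation*}
E_f(n+1) = \Eb\!\left(\frac{1}{|S|}\sum_{s\in S} f(X_n+s)\right) = \Eb\!\left(f(X_n) + (\Delta f)(X_n)\right) = E_f(n) + \Eb((\Delta f)(X_n)),
\end{equation*}
which is exactly $E_f'(n) = \Eb((\Delta f)(X_n))$. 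Iterating gives $E_f^{(k)}(n) = \Eb((\Delta^k f)(X_n))$.

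For the continuous-time statement I would argue analogously but using the generator of the semigroup rather than a one-step transition. Writing $P_t$ for the continuous-time heat semigroup on $\Zb^d$, one has $E_{c,f}(t) = (P_t f)(0)$ (starting the walk at $0$), and the Kolmogorov forward/backward equation gives $\frac{d}{dt} P_t f = \Delta P_t f = P_t \Delta f$, hence $E_{c,f}'(t) = \Eb((\Delta f)(Y_t))$; iterating yields the claim. Alternatively, and perhaps more in the spirit of the discrete identity, one can expand $p_c(t,x) = e^{-t}\sum_{m\ge 0} \frac{t^m}{m!} p_m(x)$ where $p_m$ is the $m$-step transition kernel of the discrete walk, differentiate term by term (justified on $0<t<T$ by the integrability/growth control from Remark~\ref{rem:infiniteQ}, which guarantees absolute convergence of $\sum_x f(x) e^t p_c(t,x)$ locally uniformly), and match the result against the expansion of $\Eb((\Delta^k f)(Y_t))$.

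The main obstacle is purely a matter of bookkeeping with integrability in the continuous-time case: one must ensure that all the interchanges of $\Eb$ with $\Delta$ (a finite sum, hence harmless) and, crucially, with $\frac{d}{dt}$ are legitimate on the interval $(0,T)$ where $E_{c,f}$ is finite. This is where I would invoke the monotonicity of $e^t p_c(t,x)$ from Remark~\ref{rem:infiniteQ}: it shows that for $t$ in a compact subinterval of $(0,T)$ the relevant series are dominated by a convergent one, so differentiation under the sum and Fubini both apply. In the discrete-time setting there is nothing to check since all expectations are finite sums over $B_R$ (or converge absolutely under the standing hypotheses), so the induction goes through with no analytic subtleties.
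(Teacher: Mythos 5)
Your discrete-time argument is correct and is, in substance, the paper's own: conditioning on $X_n$ and averaging over the $2d$ neighbours is the same computation that the paper performs with the kernel, writing $E_f(n+1)-E_f(n)=\sum_x f(x)(\Delta p)(n,x)$ and moving the Laplacian onto $f$ by self-adjointness. In discrete time all expectations are finite sums, so nothing further is needed, and the reduction of general $k$ to $k=1$ by iteration is also how the paper argues.

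The continuous-time half, however, has a genuine gap at the point where you declare the interchanges to be ``bookkeeping'' settled by Remark~\ref{rem:infiniteQ}. The monotonicity of $e^t p_c(t,x)$ only controls perturbations in the \emph{time} variable: it gives $p_c(t',x)\leq e^{\delta}p_c(t,x)$ for $|t'-t|<\delta$. But both interchanges you need involve \emph{spatial} shifts of the kernel. First, $|\partial_t p_c(t',x)|=|\Delta p_c(t',x)|$ is bounded by $p_c(t',x)$ plus a constant times $\sum_{y\sim x}p_c(t',y)$, so the dominating series for differentiation under the sum is $\sum_x\sum_{y\sim x}p_c(t,y)\,f(x)$; second, to move $\Delta$ from $p_c$ onto $f$ you must reorder a series whose absolute convergence amounts to $\sum_x p_c(t,x\pm e_j)f(x)<\infty$. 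Neither series is dominated by $\sum_x p_c(t,x)f(x)=E_{c,f}(t)$, and finiteness of $\Eb f(Y_t)$ does not by itself control these translated sums (a priori $f$ could be much larger where the shifted kernel sits). The paper closes exactly this hole with a kernel comparison across neighbouring sites from Chapman--Kolmogorov: $p_c(t+\delta,x)=\sum_z p_c(t,z)p_c(\delta,x-z)\geq p_c(\delta,e_1)\,p_c(t,y)$ for $y\sim x$, which bounds the shifted series by a constant multiple of $E_{c,f}(t+\delta)<\infty$ (choosing $\delta$ so that $t+\delta<T$, and using the $e^{\delta}$ time-monotonicity again). You need to supply such a comparison -- or an equivalent one in your alternative power-series route, e.g. $p_{m+1}(x)\geq \frac{1}{2d}\,p_m(x\mp e_j)$ together with control of $\sum_m m\frac{t^m}{m!}E_f(m)$ -- before the dominated-convergence step is legitimate; once it is added, your semigroup argument coincides with the paper's proof.
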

\begin{proof}[Proof of Lemma~\ref{lem:identity}]
The Lemma follows from the case $k=1$.
Let 
$$p(n, x)=\Prob(X_n=x).$$
The function $p$ satisfies the heat equation:
\begin{equation*}
p(n+1, x)-p(n, x) = (\Delta p)(n, x)\ ,
\end{equation*}
where $\Delta p$ is the Laplacian with respect to the space parameter,~$x$.
Hence, 
\begin{multline*}
E_f'(n)=E_f(n+1)-E_f(n)=\sum_x f(x)(p(n+1, x)-p(n, x))
\\=\sum_x f(x) (\Delta p)(n, x) = \sum_x (\Delta f)(x) p(n, x)
=\Eb ((\Delta f)(X_n)),
\end{multline*}
where we have used the self adjointness of the Laplace operator.

In the continuous case the line of proof is the same, but we have to treat convergence issues. First, we check that differentiation
of the infinite sum can be done term by term.
To that end, it is sufficient to check that there exists a $\delta > 0$ such that for all $t \in [0,T)$ 
$$\sum_x \sup_{|t'-t|<\delta}
|\partial_t p_c(t', x)| |f(x)|<\infty\ .$$
 We have
  $$|\partial_t p_c(t',x)| = |\Delta p_c(t',x)| \leq p_c(t',x) + (1/4)\sum_{y\sim x} p_c(t',y)\ .$$
  Observe that $p_c(t', v)\leq e^{\delta} p_c(t, v)$  for any $v\in\Zb^d$   by Remark~\ref{rem:infiniteQ}. Hence, it is enough to show that
  $\sum_x \sum_{y\sim x}p_c(t, y) |f(x)|<\infty$.
However, for any $t$ and $y\sim x$ we have $p_c(t+\delta,x) = \sum_z p_c(t,z)p_c(\delta ,x-z) \geq p_c(\delta,e_1)p_c(t,y)$. Then,
\[ \sum_x \sum_{y\sim x} p_c(t,y)|f(x)| \leq \frac{4}{p_c(\delta,e_1)} \sum_x p_c(t+\delta,x)|f(x)| \leq
 \frac{4e^{\delta}}{p_c(\delta, e_1)}E_{c,|f|}(t)<\infty\]
 as desired.
 Second, we need to justify the reordering of the terms that is used to prove the identity $\sum_x f(x) (\Delta p_c)(t,x) = \sum_x (\Delta f)(x) p_c(t, x) $. For this, it is enough to check that
 $$\forall 1\leq j\leq 2d,\quad \sum_x p_c(t, x+e_j) |f(x)|<\infty\ . $$
 Again, we apply the same argument as before:
 $$p_c(t, x+e_j)\leq\frac{e^\delta p_c(t, x)}{p_c(\delta, e_1)}\ .$$
\end{proof}

We have all the ingredients now to present
\begin{proof}[Proof of Theorem~\ref{thm:discrete-abs-mono}]
By Lemma~\ref{lem:identity}
\begin{equation*}
Q_u^{(k)}(n)=\Eb \left((\Delta^k u^2)(X_n)\right)\ .
\end{equation*}
By Theorem~\ref{thm:high-powers-laplacian} the expression
on the RHS is non-negative.
The proof for $Q_{c, u}$ is similar.
\end{proof}

\subsection{The Newton series of the growth functions}
In this section we prove the following useful formulas:
\begin{theorem}
\label{thm:finite-computation}
Let $u:B_R\to\Rb$ be an arbitrary function. Then
\[\forall 0\leq n\leq R,\quad Q_u(n)=\sum_{k=0}^R (\Delta^k u^2)(0)\binom{n}{k}.\] 
In addition,  
if $u$ is globally defined and $Q_{c, u}(t)<\infty$
then
\[ Q_{c, u}(t)=\sum_{k=0}^{\infty} (\Delta^k u^2)(0)\frac{t^k}{k!}.\]
\end{theorem}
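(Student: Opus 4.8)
The plan is to derive both formulas from the Newton forward-difference interpolation applied to the sequence $n \mapsto Q_u(n)$, together with the identity from Lemma~\ref{lem:identity} that identifies the $k$-th discrete derivative of $Q_u$ at $0$ with $\Eb((\Delta^k u^2)(X_0)) = (\Delta^k u^2)(0)$, since $X_0 = 0$ deterministically. First I would recall the purely combinatorial fact that any function $g:\Nb_0\cap[0,R]\to\Rb$ satisfies $g(n)=\sum_{k=0}^{R} g^{(k)}(0)\binom{n}{k}$ for $0\le n\le R$; this is standard (it is the finite Newton series, provable by induction on $R$ or by noting that $\{\binom{n}{k}\}_{k=0}^R$ is a basis for polynomials of degree $\le R$ restricted to $\{0,\dots,R\}$ and that the discrete derivative acts on $\binom{n}{k}$ the way $d/dx$ acts on monomials). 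Applying this to $g=Q_u$ and substituting $Q_u^{(k)}(0)=(\Delta^k u^2)(0)$ gives the first formula immediately. Note that for this part no harmonicity and no convergence issue arises — $u:B_R\to\Rb$ is arbitrary and the sum is finite because the random walk started at $0$ cannot leave $B_R$ within $R$ steps, so $Q_u(n)$ only depends on the values of $u$ on $B_R$.

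For the continuous-time statement, the target is the Taylor expansion $Q_{c,u}(t)=\sum_{k\ge 0}(\Delta^k u^2)(0)\,t^k/k!$. The structural reason is the same: by Lemma~\ref{lem:identity}, $Q_{c,u}^{(k)}(0)=\Eb((\Delta^k u^2)(Y_0))=(\Delta^k u^2)(0)$, so this is just the Maclaurin series of $Q_{c,u}$ — provided $Q_{c,u}$ is real-analytic near $0$ and the series actually converges to it. I would establish this in two steps. Step one: $Q_{c,u}$ is $C^\infty$ on $[0,T)$ with $k$-th derivative as given by Lemma~\ref{lem:identity}; this is already inside the cited lemma. Step two: the remainder in Taylor's theorem tends to $0$. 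Here I would exploit the probabilistic representation: writing $p_c(t,x)=e^{-t}\sum_{m\ge 0}\frac{t^m}{m!}p(m,x)$ (the continuous-time walk is a Poissonized discrete walk), one gets
\begin{equation*}
Q_{c,u}(t)=e^{-t}\sum_{m\ge 0}\frac{t^m}{m!}\,Q_u(m),
\end{equation*}
where now $Q_u(m)=\Eb(u(X_m)^2)$ for the globally defined $u$. Each $Q_u(m)$ is itself a finite sum $\sum_{k=0}^{m}(\Delta^k u^2)(0)\binom{m}{k}$ by the first part, and since $\Delta^k u^2\ge 0$ by Theorem~\ref{thm:high-powers-laplacian}, every term here is non-negative. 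Substituting and interchanging the (now absolutely convergent, by non-negativity and the finiteness of $Q_{c,u}(t)$ for $t<T$) double sum, and using the classical identity $e^{-t}\sum_{m\ge k}\frac{t^m}{m!}\binom{m}{k}=\frac{t^k}{k!}$, collapses everything to $\sum_{k\ge 0}(\Delta^k u^2)(0)\,t^k/k!$.

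The main obstacle I anticipate is the justification of the interchange of summation order in the double series, i.e.\ Tonelli/Fubini for the sum $\sum_{m}\sum_{k}$. This is exactly where non-negativity of $\Delta^k u^2$ (Theorem~\ref{thm:high-powers-laplacian}) is essential: with all terms $\ge 0$, Tonelli applies unconditionally, and the hypothesis $Q_{c,u}(t)<\infty$ for $t<T$ guarantees the resulting value is finite, so the rearrangement is legitimate for all $t\in(0,T)$ (and trivially at $t=0$). One should also note the identity $e^{-t}\sum_{m\ge k}\frac{t^m}{m!}\binom{m}{k}=\frac{t^k}{k!}$ follows from reindexing $m=k+j$ and recognizing $\sum_{j\ge 0}t^{j}/j!=e^{t}$. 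A minor point to check is that the first part of the theorem, as applied inside step two, is being used for the restriction of the globally defined $u$ to $B_m$, which is exactly its stated generality. I would present the discrete case first in full, then the continuous case building on it, flagging that harmonicity enters only via Theorem~\ref{thm:high-powers-laplacian} in the convergence argument — the formulas themselves are formal consequences of Lemma~\ref{lem:identity} and Newton interpolation.
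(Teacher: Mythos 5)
Your proof of the first formula is essentially the paper's own argument: the finite Newton series (Theorem~\ref{thm:discrete-taylor}) combined with Lemma~\ref{lem:identity} to identify $Q_u^{(k)}(0)=(\Delta^k u^2)(0)$ (the paper handles the local definition of $u$ by extending $u^2$ by zero outside $B_R$, which is the same minor technicality you dispose of by locality of the walk and of $\Delta^k$). For the continuous-time formula, however, you take a genuinely different route. The paper argues analytically: by Theorem~\ref{thm:discrete-abs-mono} the function $Q_{c,u}$ is absolutely monotonic on $[0,T)$, so Bernstein's Theorem~\ref{thm:bernstein} gives real-analyticity and convergence of the Maclaurin series to $Q_{c,u}$, and Lemma~\ref{lem:identity} identifies the coefficients as $(\Delta^k u^2)(0)/k!$. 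You instead Poissonize: $p_c(t,x)=e^{-t}\sum_m \frac{t^m}{m!}p(m,x)$, hence $Q_{c,u}(t)=e^{-t}\sum_m\frac{t^m}{m!}Q_u(m)$, then insert the finite Newton expansion of each $Q_u(m)$ and swap the two sums by Tonelli, which is legitimate because $(\Delta^k u^2)(0)\ge 0$ by Theorem~\ref{thm:high-powers-laplacian}; the identity $e^{-t}\sum_{m\ge k}\frac{t^m}{m!}\binom{m}{k}=\frac{t^k}{k!}$ then collapses the series. Both arguments are correct, and both secretly use harmonicity for the second formula even though the theorem is phrased for an ``arbitrary'' $u$ (the paper through absolute monotonicity plus Bernstein, you through positivity of $\Delta^k u^2$); what your route buys is self-containedness --- it avoids Bernstein's theorem and any real-analyticity considerations, and it exhibits the continuous-time expansion as a purely combinatorial consequence of the discrete one --- at the modest cost of invoking the Poissonization formula for the continuous-time kernel, which is consistent with the paper's normalization $\partial_t p_c=\Delta p_c$. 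The paper's route is shorter given that absolute monotonicity and Bernstein's theorem are already established and fits its overall theme of treating $Q_{c,u}$ as an absolutely monotonic function.
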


The proof of Theorem~\ref{thm:finite-computation} is based on Lemma~\ref{lem:identity}, Theorem~\ref{thm:bernstein} and the following classical theorem
on finite differences.

Let $F:\Nb_0\to\Rb$ be a discrete function.
Let $F^{(k)}$ be as in Definition~\ref{def:abs-mono-discrete-derivative}.

\begin{theorem}[Newton series]
\label{thm:discrete-taylor}
The function $F:\Nb_0\to\Rb$ can be uniquely written in the form 
\begin{equation*}
 F(n)  = \sum_{k=0}^\infty a_k \binom{n}{k}.
\end{equation*}
Moreover, $a_k = F^{(k)}(0)$.
\end{theorem}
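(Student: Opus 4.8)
\textbf{Proof proposal for Theorem~\ref{thm:discrete-taylor}.}
The plan is to exploit the fact that, for each fixed $n$, the series $\sum_{k\geq 0}a_k\binom{n}{k}$ is actually a \emph{finite} sum, since $\binom{n}{k}=0$ whenever $k>n$; hence no convergence question arises and the statement is purely algebraic. First I would introduce the shift operator $(TF)(n):=F(n+1)$, so that the forward difference is $F'=(T-I)F$ and $F^{(k)}=(T-I)^kF$. Since $T$ and the identity $I$ commute, the binomial theorem in the (commutative) operator ring gives $T^n=(I+(T-I))^n=\sum_{k=0}^n\binom{n}{k}(T-I)^k$. Applying both sides to $F$ and evaluating at $0$ yields
\[
F(n)=(T^nF)(0)=\sum_{k=0}^n\binom{n}{k}\bigl((T-I)^kF\bigr)(0)=\sum_{k=0}^n\binom{n}{k}F^{(k)}(0),
\]
which establishes existence of the representation with $a_k=F^{(k)}(0)$ (and automatically $a_k\binom{n}{k}=0$ for $k>n$, so the infinite sum in the statement makes sense and equals the finite one).

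For uniqueness, suppose $F(n)=\sum_{k\geq 0}a_k\binom{n}{k}$ for all $n$, with the sum finite for each $n$ as above. I would compute the discrete derivative termwise using Pascal's identity $\binom{n+1}{k}-\binom{n}{k}=\binom{n}{k-1}$, i.e.\ the fact that the forward difference of $n\mapsto\binom{n}{k}$ is $n\mapsto\binom{n}{k-1}$. This gives $F^{(j)}(n)=\sum_{k\geq 0}a_k\binom{n}{k-j}=\sum_{m\geq 0}a_{m+j}\binom{n}{m}$, and then evaluating at $n=0$, where $\binom{0}{m}=1$ if $m=0$ and $0$ otherwise, forces $F^{(j)}(0)=a_j$. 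Thus the coefficients in any such representation must equal $F^{(j)}(0)$, proving both uniqueness and the identification $a_k=F^{(k)}(0)$.

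There is no serious obstacle here; the only point requiring a word of care is the justification that the ``infinite'' Newton series is a finite sum pointwise in $n$ (so that termwise differentiation is legitimate and the operator-binomial manipulation is valid), which follows immediately from $\binom{n}{k}=0$ for $k>n$. If one prefers to avoid operator language entirely, the existence half can instead be proved by a direct induction on $n$: the base case $n=0$ is trivial, and the inductive step uses $F(n+1)=F(n)+F'(n)$ together with Pascal's rule, noting that the Newton coefficients of $F'$ are $\bigl(F'\bigr)^{(k)}(0)=F^{(k+1)}(0)$.
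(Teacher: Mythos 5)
Your proposal is correct, but it follows a genuinely different route from the paper's. The paper proves the existence half by direct verification: it writes $F^{(k)}(0)=\sum_{j=0}^{k}(-1)^{k+j}\binom{k}{j}F(j)$, substitutes this into $\sum_{k}F^{(k)}(0)\binom{n}{k}$, interchanges the (finite) double sum, and collapses the inner alternating sum using $\sum_{l=0}^{n-j}(-1)^l\binom{n-j}{l}=\delta_{n-j,0}$ to recover $F(n)$. You instead obtain the identity in one stroke from the operator binomial theorem $T^n=\bigl(I+(T-I)\bigr)^n=\sum_{k=0}^n\binom{n}{k}(T-I)^k$ applied to $F$ and evaluated at $0$, which avoids both the sum interchange and the alternating-sum computation; your fallback induction on $n$ via Pascal's rule is an equally valid elementary substitute. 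A further difference worth noting: you prove the uniqueness claim explicitly, by termwise differencing of the pointwise-finite series and evaluation at $n=0$ to force $a_j=F^{(j)}(0)$, whereas the paper's written proof only verifies the formula with $a_k=F^{(k)}(0)$ and leaves uniqueness unaddressed even though the theorem asserts it. In short, the paper's argument is a direct inversion check, while yours is slightly more structural, marginally shorter, and covers the full statement as written.
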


Although standard, we reproduce the short proof for completeness.
\begin{proof}[Proof of Theorem~\ref{thm:discrete-taylor}]
We can write
$$F^{(k)}(n)=\sum_{j=0}^{k} (-1)^{k+j}\binom{k}{j}F(n+j).$$
Hence
\begin{multline*}
\sum_{k=0}^{\infty} F^{(k)}(0)\binom{n}{k}
=\sum_{k=0}^{n}\sum_{j=0}^{k} (-1)^{k+j}\binom{k}{j}
\binom{n}{k}F(j)\\
=\sum_{j=0}^{n}(-1)^jF(j)\sum_{k=j}^{n} (-1)^{k}\binom{k}{j}
\binom{n}{k}=
\\=\sum_{j=0}^{n} (-1)^j F(j)\sum_{l=0}^{n-j}
(-1)^{l+j}\binom{l+j}{j}\binom{n}{l+j}
\\=\sum_{j=0}^{n} F(j)\binom{n}{j}\sum_{l=0}^{n-j} (-1)^l
\binom{n-j}{l}=\sum_{j=0}^{n}F(j)\binom{n}{j}\delta_{n-j, 0}
=F(n).
\end{multline*}
\end{proof}

\begin{proof}[Proof of Theorem~\ref{thm:finite-computation}]
Let $f:\Zb^d\to\Rb$ be any function.
By Lemma~\ref{lem:identity} and Theorem~\ref{thm:discrete-taylor}
we get
$$E_f(n)=\sum_{k=0}^n E_{\Delta^k f}(0)\binom{n}{k}.$$
It only remains to observe that
$E_{\Delta^k f}(0)=(\Delta^k f)(0)$.
If we take $f=u^2$ in~$B_R$ and $f=0$ outside $B_R$ we get the first part of the theorem.

We move to the second part.
By Theorem~\ref{thm:discrete-abs-mono} and  Theorem~\ref{thm:bernstein} we know that
\begin{equation}
\label{eqn:Q_cu-taylor}
Q_{c, u}(t)=\sum_{k=0}^{\infty} Q_{c, u}^{(k)}(0)\frac{t^k}{k!}\ .
\end{equation}
The second part of the theorem now follows from
formula~\eqref{eqn:Q_cu-taylor} and Lemma~\ref{lem:identity}.
\end{proof}
\subsection{Proof of Corollary~\ref{cor:zeros-vs-growth}}
In this section we show that Theorem~\ref{thm:discrete-abs-mono} immediately implies  that   harmonic functions of polynomial growth 
are polynomials. At the same time it gives a quantitative estimate on the dimension of the space harmonic polynomials of degree at most~$M$.
This gives a simple proof of a well known result.
\begin{proof}[Proof of Corollary~\ref{cor:zeros-vs-growth}]
By the assumption there exist $C, D > 0$ such that
$$\forall x\in\Zb^d,\quad |u(x)|<C|x|^{M}+D $$
We  estimate~$Q_u$:
\begin{equation}
\label{ineq:Qu-above}
Q_u(n)=\sum_x u(x)^2 p(n, x) 
=\sum_{|x|\leq n}u(x)^2 p(n, x) \leq
(Cn^M+D)^2.
\end{equation}
On the other hand,
by Theorem~\ref{thm:discrete-abs-mono} and Theorem~\ref{thm:finite-computation} we know
that
\begin{equation}
\label{ineq:Q_u-below}
\forall k, n\in\Nb_0,\quad Q_u(n) \geq (\Delta^k u^2)(0) \binom{n}{k}.
\end{equation}
Inequalities~\eqref{ineq:Qu-above} and~\eqref{ineq:Q_u-below} imply
\begin{equation} 
\label{eq:high-laplace-vanish}
\forall k>2M,\quad \Delta^k (u^2)(0) = 0.
\end{equation} 
To see that $u$  is a polynomial, observe that~\eqref{eq:high-laplace-vanish} and Remark~\ref{rem:derivatives} imply that 
\[ \forall k>2M\ \forall s_1,\dots,s_{k} \in S,\quad u_{s_1s_2\dots s_k}(0) = 0\] and hence
\[  \forall s_1,\dots, s_{2M+1} \in S,\quad u_{s_1 s_2 \dots s_{2M+1}} \equiv 0.\]
i.e. $u$ is a polynomial of degree at most $2M$.
But of course, due to the growth assumption on $u$, 
$u$ is a polynomial of degree at most $M$.

Now, assume $u$ vanishes on the ball $B_M$. Then,
\begin{equation}
\label{eq:low-laplace-vanish}
\forall 0\leq k\leq M,\quad \Delta^k (u^2)(0) = 0.
\end{equation}
In addition, by Remark~\ref{rem:derivatives}
\begin{equation}
\label{eq:high-M-laplace-vanish}
\forall k > M,\quad \Delta^k (u^2)(0) = 0.
\end{equation}
We conclude from Theorem~\ref{thm:finite-computation},~\eqref{eq:low-laplace-vanish}
and~\eqref{eq:high-M-laplace-vanish} that
\begin{equation*}
\forall n\in\Nb_0,\quad Q_u(n)=0,
\end{equation*}
which immediately implies $u\equiv 0$.
\end{proof}

\section{Proof of Theorems~\ref{cor:continuous-propagation-of-smallness}-\ref{thm:propagation-with-error-omitted}, and~\ref{thm:discrete-propagation-of-smallness-1-2-5}}
\label{sec:logconv}
In this section we deduce from the absolute monotonicity of
the discrete Agmon function (Theorem~\ref{thm:discrete-abs-mono}) logarithmic convexity type results. 

For convenience we define
\begin{definition}
A function $f:(0,\infty)\to(0,\infty)$ is \emph{log-convex on a logarithmic scale}  (LCLS) if $t\mapsto\log f(e^t)$ is a convex function of $t\in\Rb$. If $f$ is continuous then $f$ is LCLS if $f(2r) \leq \sqrt{f(r) f(4r)}$ for all $r > 0$. 
\end{definition}

We begin with
\begin{proof}[Proof of Theorem~\ref{cor:continuous-propagation-of-smallness}]
By Theorems~\ref{thm:bernstein} and~\ref{thm:finite-computation} 
$$Q_{c, u}(t)= \sum_{k=0}^\infty a_k t^k$$ where all $a_k \geq 0$. 
 Clearly, $a_k t^k$ is LCLS. It is well known that a sum and a limit of log-convex functions is log-convex (see Lemma~\ref{lem:additive} below).
\end{proof}

We now move to the proof of Theorem~\ref{thm:discrete-propagation-of-smallness}.
To explain also the case $\alpha=1/2$ of Theorem~\ref{thm:discrete-propagation-of-smallness-aspect-ratio} we prove a slightly more general version:
\begin{theoremA}
Let $P>1$. Let $u:B_{P^2 R}\to\Rb$ be harmonic and let 
$0\leq\eps\leq 0.5$. Then
\begin{equation}
\label{ineq:propagation-with-error-P}
Q_u(\lfloor Pn \rfloor)\leq \sqrt{e^{n^{-2\eps}}Q_u(n)Q_u(\lceil P^2 n\rceil)}+ P^{-n^{0.5-\eps}}Q_u(\lceil P^2 n \rceil)
\end{equation}
for all $4P^2\leq n\leq R$.
\end{theoremA}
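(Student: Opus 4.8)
The plan is to reduce the inequality \eqref{ineq:propagation-with-error-P} to a statement about the Newton series $Q_u(n)=\sum_{k=0}^{R}a_k\binom{n}{k}$ with $a_k=(\Delta^k u^2)(0)\geq 0$, which is available from Theorem~\ref{thm:discrete-abs-mono} and Theorem~\ref{thm:finite-computation}. The key point is that each monomial term $a_k\binom{n}{k}$ is, up to controllable errors, log-convex on a logarithmic scale in $n$: indeed $\binom{n}{k}\approx n^k/k!$ and $r\mapsto r^k$ is exactly LCLS, so $\binom{\lfloor Pn\rfloor}{k}^2$ should be comparable to $\binom{n}{k}\binom{\lceil P^2n\rceil}{k}$ with a multiplicative error that is close to $1$. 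First I would make this precise: estimate the ratio $\binom{\lfloor Pn\rfloor}{k}^2\big/\big(\binom{n}{k}\binom{\lceil P^2 n\rceil}{k}\big)$ from above, getting a bound of the form $e^{n^{-2\eps}}$ once $k$ is not too large relative to $n$ (say $k\leq n^{1-2\eps}$, or some similar threshold where the $1,2,4$-type ratios $\alpha P+(1-\alpha)/p-1$ enter through the exponent), and noting it is where the hypothesis $n\geq 4P^2$ and $\eps\le 0.5$ gets used.

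Next I would split the sum at a cutoff $K\approx n^{0.5-\eps}$ (this is the exponent appearing in the error term): write $Q_u=S_{\le K}+S_{>K}$ where $S_{\le K}=\sum_{k\le K}a_k\binom{\cdot}{k}$ and $S_{>K}=\sum_{k>K}a_k\binom{\cdot}{k}$. For the low-order part $S_{\le K}$, apply the term-by-term LCLS estimate together with the Cauchy–Schwarz/triangle-type inequality for LCLS functions (a sum of functions that are each LCLS up to a common multiplicative constant $e^{n^{-2\eps}}$ is itself LCLS up to that constant — this is essentially Lemma~\ref{lem:additive} with a uniform constant pulled out), yielding
$$S_{\le K}(\lfloor Pn\rfloor)\le \sqrt{e^{n^{-2\eps}}\,Q_u(n)\,Q_u(\lceil P^2 n\rceil)}.$$
For the high-order tail $S_{>K}(\lfloor Pn\rfloor)$, I would bound it by $P^{-n^{0.5-\eps}}Q_u(\lceil P^2 n\rceil)$: since $\lfloor Pn\rfloor\le \lceil P^2 n\rceil$ and all $a_k\ge 0$, we have $\binom{\lfloor Pn\rfloor}{k}\le \binom{\lceil P^2 n\rceil}{k}\cdot(\text{ratio})$, and for $k>K$ the ratio $\binom{\lfloor Pn\rfloor}{k}/\binom{\lceil P^2 n\rceil}{k}\le (Pn/P^2 n)^{\,\text{roughly }k}=P^{-k}\le P^{-K}$ up to lower-order corrections; summing over $k>K$ and using $a_k\ge0$ gives the claimed tail bound. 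Adding the two pieces gives \eqref{ineq:propagation-with-error-P}.

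The main obstacle I anticipate is the bookkeeping in the binomial ratio estimates, particularly near the cutoff $k\approx K$: one must simultaneously control (i) the multiplicative defect from exact log-convexity of $\binom{n}{k}$ versus $n^k/k!$ (the correction factors $\prod_{j<k}(1-j/n)$ etc.), and (ii) the effect of the floor and ceiling in $\lfloor Pn\rfloor$ and $\lceil P^2n\rceil$, and show that both are absorbed into $e^{n^{-2\eps}}$ for $k\le K$ while the genuine exponential decay $P^{-k}$ survives for $k>K$. Getting the threshold $K\asymp n^{0.5-\eps}$ to be exactly the break-even point between "error negligible" and "term negligible" — so that the constant is $e^{n^{-2\eps}}$ and the error is $P^{-n^{0.5-\eps}}$ — is the delicate quantitative heart of the argument, and is presumably why the hypothesis $n\ge 4P^2$ is imposed. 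Everything else (nonnegativity of $a_k$, the Newton expansion, the elementary LCLS facts) is already in hand from the earlier sections.
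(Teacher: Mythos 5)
Your plan is correct and follows essentially the same route as the paper: expand $Q_u$ in its Newton series with nonnegative coefficients $a_k=(\Delta^k u^2)(0)$, prove the two binomial-ratio estimates with the case split at $k\approx n^{0.5-\eps}$ (where $n\geq k^2$ and $n\geq 4P^2$ make the low-$k$ ratio at most $e^{k^2/n}\leq e^{n^{-2\eps}}$, and the high-$k$ ratio is at most $P^{-k}$ exactly, no corrections needed), and recombine via the Cauchy--Schwarz additivity lemma. The paper merely organizes the recombination slightly differently --- it proves the termwise inequality (in fact a max of the two bounds) for every $k$ and applies Lemma~\ref{lem:additive} once, rather than splitting the sum at the cutoff --- but this is the same argument.
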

\begin{proof}[Proof of Theorem~\ref{thm:discrete-propagation-of-smallness}']
By Theorems~\ref{thm:high-powers-laplacian}
and~\ref{thm:finite-computation}
 for $n \leq PR$ we know that
\[Q_u(n) = 
\sum_{k=0}^{\infty} a_k \binom{n}{k}  = \sum_{k=0}^{PR} a_k \binom{n}{k}\] 
where $a_k \geq 0$. We observe that inequality~\eqref{ineq:propagation-with-error-P} is additive
(see Lemma~\ref{lem:additive}). Hence, it suffices to show
\begin{equation}
\label{ineq:prop-smallness-simplified}
\binom{\lfloor Pn\rfloor }{k} \leq \sqrt{e^{n^{-2\ep}}\binom{n}{k}\binom{\lceil P^2 n\rceil  }{k}}+P^{-n^{0.5-\ep}}  \binom{\lceil P^2n\rceil }{k}\ 
\end{equation}
for all $k, n\in \Nb_0$.
It can be easily checked that~\eqref{ineq:prop-smallness-simplified}
is satisfied for $k=0, 1$ and all $n$.
From this point on, we assume $k\geq 2$.

Suppose that $n^{1-2\ep} \geq k^2$:
\begin{multline}
\frac{\binom{\lfloor Pn\rfloor}{k}^2}{\binom{n}{k}\binom{\lceil P^2n\rceil }{k}} 
\leq \frac{\prod_{j=0}^{k-1} {(Pn-j)^2}}{\prod_{j=0}^{k-1}(n-j)(P^2n-j)} = \prod_{j=0}^{k-1} \frac{P^2n^2 - 2Pnj + j^2}{P^2n^2 - (P^2+1)nj + j^2}\\
= \prod_{j=0}^{k-1} \left( 1+ \frac{(P-1)^2nj}{P^2n^2-(P^2+1)nj+j^2} \right) \leq  \prod_{j=0}^{k-1} \left(1 + \frac{(P-1)^2j}{P^2 n- (P^2+1)k}\right) \\ 
\stackrel{(*)}{\leq} \prod_{j=0}^{k-1} \left(1 + \frac{j}{n}\right)   \leq \left(1+\frac{k}{n}\right)^k = \left(1+\frac{1}{n/k}\right)^{\frac{n}{k}\frac{k^2}{n}} \leq e^{\frac{k^2}{n}} \leq e^{n^{-2\ep}}\ , 
\label{ineq:propagation-large-n}
\end{multline}
where in $(*)$ we used $P^2 n -(P^2+1)k  > (P-1)^2n$ which holds as long as $n \geq k^2$ and $n \geq 4P^2$.  

On the other hand, if $n^{1-2\ep} \leq k^2 $ then 
\[ \frac{\binom{\lfloor Pn\rfloor }{k}}{\binom{\lceil P^2n\rceil }{k}} \leq \prod_{j=0}^{k-1} \frac{Pn-j}{P^2n-j} \leq P^{-k} \leq P^{-n^{0.5-\ep}}.\]
Thus, we have in fact proved a slightly stronger inequality than~\eqref{ineq:prop-smallness-simplified}, namely
\[\binom{\lfloor Pn\rfloor }{k} \leq \max\left\{ \sqrt{e^{n^{-2\ep}} \binom{n}{k}\binom{\lceil P^2n\rceil }{k}}, P^{-n^{0.5-\ep}}\binom{\lceil P^2n\rceil}{k}\right\}. \]
\end{proof}

\begin{proof}[Proof of Theorem~\ref{thm:propagation-with-error-omitted}]
 By Theorem~\ref{thm:finite-computation} we can write
 $$\forall n\in\Nb_0,\quad Q_u(n)=\sum_{k=0}^M \Delta^k (u^2)(0)\binom{n}{k}.$$
 Indeed,  Remark~\ref{rem:derivatives} shows that if $k>m$ then
  $\Delta^k (u^2)(0)=0$.
  Since $n^{1-2\ep} \geq M^2$ and $n > 16$, for any $0\leq k\leq M$  inequality~\eqref{ineq:propagation-large-n} with $P=2$ applies and yields 
\[\binom{2n}{k} \leq \sqrt{e^{n^{-2\ep}} \binom{n}{k}\binom{4n}{k}}\ . \] 
Theorem~\ref{thm:high-powers-laplacian} and Lemma~\ref{lem:additive} complete the proof.
\end{proof}

\begin{proof}[Proof of Theorem~\ref{thm:discrete-propagation-of-smallness-1-2-5}]
While the idea is similar to the idea of  the proof of Theorem~\ref{thm:discrete-propagation-of-smallness}
the estimates here are simpler.

Let $n\geq (1+1/(4\delta))k$.
Then
$$\frac{\binom{2n}{k}^2}{\binom{n}{k}
\binom{\lceil4(1+\delta)n\rceil}{k}}
\leq \prod_{j=0}^{k-1} 
\frac{(2n-j)^2}{(n-j)(4(1+\delta)n-j)}\leq 1,$$
where the last estimate is true since $n\geq (1+1/(4\delta))k$.

On the other hand, if $n\leq (1+1/(4\delta))k$
then
$$\frac{\binom{2n}{k}}{\binom{\lceil4(1+\delta)n\rceil}{k}}
=\prod_{j=0}^{k-1} \frac{2n-j}{4(1+\delta)n-j}\leq 2^{-k}\leq 2^{-4\delta n/(4\delta+1)}\ .$$
To summarize
$$\binom{2n}{k} \leq \max\left\{\sqrt{\binom{n}{k}\binom{\lceil 4(1+\delta)n\rceil }{k}}, 2^{-4\delta n/(4\delta+1)}\binom{\lceil 4(1+\delta)n\rceil}{k}\right\}. $$
The claim now follows as in the proof of Theorem~\ref{thm:discrete-propagation-of-smallness}'.
\end{proof}
It remains to prove
\begin{lemma}\label{lem:additive}
 Let $f_1, f_2 : \Nb_0\to\Rb^+$ be functions such that
$$ f_i( n_2 ) \leq C_1 \sqrt{f_i(n_1) f_i(n_3)} 
+ C_2 E(n_1)f_i( n_3 ) $$
for some $n_1, n_2, n_3\in \Nb_0$, some constants $C_1,C_2 > 0$ and some function $E: \Nb_0\to\Rb^+$.  Then the same holds for $f = f_1+f_2$.
\end{lemma}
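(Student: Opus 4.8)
The plan is to prove the two-variable inequality directly by adding the two hypotheses and then controlling the resulting cross term. Write $a_i=\sqrt{f_i(n_1)f_i(n_3)}$ and $b_i=E(n_1)f_i(n_3)$, so that $f_i(n_2)\le C_1 a_i+C_2 b_i$ for $i=1,2$. Summing gives
\[
f(n_2)=f_1(n_2)+f_2(n_2)\le C_1(a_1+a_2)+C_2(b_1+b_2).
\]
The term $C_2(b_1+b_2)=C_2 E(n_1)\bigl(f_1(n_3)+f_2(n_3)\bigr)=C_2 E(n_1)f(n_3)$ is already exactly what we want, so the whole difficulty is concentrated in showing
\[
a_1+a_2=\sqrt{f_1(n_1)f_1(n_3)}+\sqrt{f_2(n_1)f_2(n_3)}\ \le\ \sqrt{\bigl(f_1(n_1)+f_2(n_1)\bigr)\bigl(f_1(n_3)+f_2(n_3)\bigr)}=\sqrt{f(n_1)f(n_3)}.
\]

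This is a clean instance of superadditivity of the geometric mean, and I would prove it by squaring: the claimed inequality is equivalent (all quantities being nonnegative) to
\[
f_1(n_1)f_1(n_3)+f_2(n_1)f_2(n_3)+2\sqrt{f_1(n_1)f_1(n_3)f_2(n_1)f_2(n_3)}\ \le\ f_1(n_1)f_1(n_3)+f_2(n_1)f_2(n_3)+f_1(n_1)f_2(n_3)+f_2(n_1)f_1(n_3),
\]
i.e. to $2\sqrt{f_1(n_1)f_2(n_3)\cdot f_2(n_1)f_1(n_3)}\le f_1(n_1)f_2(n_3)+f_2(n_1)f_1(n_3)$, which is just the AM--GM inequality $2\sqrt{xy}\le x+y$ applied to $x=f_1(n_1)f_2(n_3)$ and $y=f_2(n_1)f_1(n_3)$. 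Combining this with the computation of the previous paragraph yields $f(n_2)\le C_1\sqrt{f(n_1)f(n_3)}+C_2 E(n_1)f(n_3)$, which is the assertion.

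There is essentially no obstacle here; the only thing to be careful about is nonnegativity, which is guaranteed since the $f_i$ take values in $\Rb^+$, so that all square roots are defined and squaring is a legitimate equivalence. By a trivial induction this extends from two summands to any finite sum, which is how the lemma is applied in the proofs above (where $Q_u$ is an $\Rb^+$-linear combination $\sum_k a_k\binom{n}{k}$ of the functions $n\mapsto\binom{n}{k}$, each of which satisfies the relevant inequality). I would state the induction step in one line and leave the routine verification to the reader.
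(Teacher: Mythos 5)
Your proof is correct and follows essentially the same route as the paper: both reduce the lemma to the superadditivity inequality $\sqrt{ab}+\sqrt{cd}\le\sqrt{(a+c)(b+d)}$, which the paper dispatches by Cauchy--Schwarz and you verify equivalently by squaring and AM--GM. No issues.
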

\begin{proof}
The Lemma follows if we can show
\[ \sqrt{ab} + \sqrt{cd} \leq \sqrt{a+c} \cdot \sqrt{b+d}\]
for all $a, b, c, d\geq 0$. This is true due to the Cauchy-Schwarz inequality.
\end{proof}

\section{Proof of Theorem~\ref{thm:weak-conjecture}}
\label{sec:weak-conj}
Let us now give examples of harmonic functions
which exhibit the optimality of the error term
in Theorem~\ref{thm:discrete-propagation-of-smallness} if we do not
fix the dimension of the lattice.
The idea is to construct a function $u$ 
for which $Q_u(n)=\binom{n}{k}$ for any given~$k$,
and to analyze the respective error in a three circles theorem.

\begin{proof}[Proof of Theorem~\ref{thm:weak-conjecture}]
Let $k\leq d$. Define $u_k:\Zb^d\to\Rb$ by 
$u_k(x_1,\dots,x_d) = x_1 x_2 \cdots x_k$. It is easy to check that $u_k$ is harmonic
when we take the generating set, $S$, of $\Zb^d$ to be
the standard one, i.e., we consider $\Zb^d$ as the free abelian
group generated by $S_0=\{e_1, \ldots, e_d\}$ and we
take $S=S_0\cup{-S_0}$.
By Theorem~\ref{thm:finite-computation} and Remark~\ref{rem:derivatives} we see that 
\[ Q_{u_k}(n) = \sum_{l = 0}^k   \frac{1}{|S|^l} \sum_{s_1,s_2,\dots,s_l \in S} (u_{s_1s_2\dots s_l})^2(0)\binom{n}{l}.\]
However $u_{s_1 s_2 \dots s_l}(0) = 0$ unless $l=k$ and 
the $s_i$'s generate $\oplus_{j=1}^k \Zb e_j$.
In that case $u_{s_1 s_2\dots s_k}(0)^2 = 1$. Thus 
\[ Q_{u_k}(n) = c_{d, k} \binom{n}{k}, \]
where $c_{d, k}=k!/d^k$.
The theorem now follows from Proposition~\ref{prop:binomial-error} below.
\end{proof}
\begin{remark}
\label{rem:possibleQ}
We can in fact see from the previous proof that if $P$ is any  polynomial of degree at most~$d$ that is absolutely monotonic in the discrete sense, then there exists a harmonic polynomial $u: \Zb^d \to \Rb$ such that $Q_u= P$.
\end{remark}
\begin{proposition}
\label{prop:binomial-error}
Let $C>0, \eps>0$. Let $k>k_0(C, \eps)$,
and $n=k^2/\log k$.
Then,
$$\binom{2n}{k}>C\sqrt{\binom{n}{k}\binom{4n}{k}} + 2^{-n^{0.5+\eps}}\binom{4n}{k}\ .$$
\end{proposition}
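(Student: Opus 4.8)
The plan is to estimate the three quantities $\binom{n}{k}$, $\binom{2n}{k}$, $\binom{4n}{k}$ directly using the product formula $\binom{Nn}{k}=\prod_{j=0}^{k-1}(Nn-j)/k!$ when $n=k^2/\log k$, which is the regime where $k$ is large compared to $\sqrt n$ (indeed $k/\sqrt n=\sqrt{\log k}\to\infty$). In this regime the binomials behave very differently from the "polynomial" regime $n\gg k^2$: the ratios $\binom{2n}{k}/\binom{n}{k}$ and $\binom{4n}{k}/\binom{2n}{k}$ are both close to $2^k$ up to lower-order corrections, so the main term $\sqrt{\binom{n}{k}\binom{4n}{k}}$ is roughly $\binom{2n}{k}$ times a factor I need to show is strictly less than $1/C$ (after absorbing $C$), while the error term $2^{-n^{0.5+\eps}}\binom{4n}{k}$ needs to be shown negligible compared to $\binom{2n}{k}$. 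So I would compute the two relevant ratios to enough precision.

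First I would write
\[
\frac{\binom{n}{k}\binom{4n}{k}}{\binom{2n}{k}^2}=\prod_{j=0}^{k-1}\frac{(n-j)(4n-j)}{(2n-j)^2}
=\prod_{j=0}^{k-1}\Bigl(1-\frac{j^2}{(2n-j)^2}\Bigr)^{-1}\cdot\frac{(n-j)(4n-j)}{n\cdot 4n}\cdot\frac{n\cdot4n}{(2n-j)^2}
\]
— more cleanly, $(n-j)(4n-j)=(2n-j)^2-2nj-\tfrac{j^2}{?}$; let me instead just expand: $(2n-j)^2-(n-j)(4n-j)=4n^2-4nj+j^2-(4n^2-5nj+j^2)=nj$. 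Hence
\[
\frac{(n-j)(4n-j)}{(2n-j)^2}=1-\frac{nj}{(2n-j)^2}.
\]
For $0\le j\le k-1$ and $n=k^2/\log k$ we have $j\le k\ll n$, so $(2n-j)^2=(2n)^2(1+O(k/n))$ and $\tfrac{nj}{(2n-j)^2}=\tfrac{j}{4n}(1+O(k/n))$. Therefore
\[
\frac{\binom{n}{k}\binom{4n}{k}}{\binom{2n}{k}^2}=\prod_{j=0}^{k-1}\Bigl(1-\tfrac{j}{4n}(1+o(1))\Bigr)=\exp\Bigl(-\tfrac{1}{4n}\tbinom{k}{2}(1+o(1))\Bigr)=\exp\Bigl(-\tfrac{k^2}{8n}(1+o(1))\Bigr).
\]
Since $n=k^2/\log k$, the exponent is $-\tfrac18\log k\,(1+o(1))$, so this ratio is $k^{-1/8+o(1)}$, which tends to $0$. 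Consequently $C\sqrt{\binom{n}{k}\binom{4n}{k}}=C\,k^{-1/16+o(1)}\binom{2n}{k}=o\bigl(\binom{2n}{k}\bigr)$, say at most $\tfrac13\binom{2n}{k}$ once $k>k_0(C)$.

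Next I would bound the error term. We have $\binom{4n}{k}/\binom{2n}{k}=\prod_{j=0}^{k-1}\tfrac{4n-j}{2n-j}\le 2^k$, so $2^{-n^{0.5+\eps}}\binom{4n}{k}\le 2^{k-n^{0.5+\eps}}\binom{2n}{k}$. With $n=k^2/\log k$ one gets $n^{0.5+\eps}=(k^2/\log k)^{0.5+\eps}=k^{1+2\eps}(\log k)^{-(0.5+\eps)}\gg k$ for $k$ large, so $k-n^{0.5+\eps}\to-\infty$ and this term is also at most $\tfrac13\binom{2n}{k}$ for $k>k_0(C,\eps)$. Adding the two bounds gives RHS $\le\tfrac23\binom{2n}{k}<\binom{2n}{k}$, which is the claimed strict inequality. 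The main obstacle is just being careful with the $(1+o(1))$ error in the product estimate — I need a genuine upper bound on $\prod(1-\tfrac{nj}{(2n-j)^2})$, not merely an asymptotic, so I would replace "$1+o(1)$" by an explicit inequality such as $\tfrac{nj}{(2n-j)^2}\ge\tfrac{j}{4n}\cdot\bigl(1-\tfrac{k}{2n}\bigr)^{-2}\ge\tfrac{j}{8n}$ for $k/n$ small, use $1-x\le e^{-x}$, and conclude $\frac{\binom{n}{k}\binom{4n}{k}}{\binom{2n}{k}^2}\le\exp(-\tfrac{1}{16n}\binom{k}{2})\le k^{-c}$ for an explicit $c>0$; everything else is then immediate.
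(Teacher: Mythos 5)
Your proposal is correct and follows essentially the same route as the paper: both proofs show that $\binom{n}{k}\binom{4n}{k}/\binom{2n}{k}^2$ is a negative power of $k$ (via the identity $(2n-j)^2-(n-j)(4n-j)=nj$ and the estimate $\exp(-ck^2/n)=k^{-c}$ when $n=k^2/\log k$), and both dispose of the error term by comparing $2^{-n^{0.5+\eps}}$ with the factor of order $2^{-k}$ in $\binom{2n}{k}/\binom{4n}{k}$, using $n^{0.5+\eps}\gg k$. One small remark: in your explicit-bound chain the middle inequality $\frac{nj}{(2n-j)^2}\ge\frac{j}{4n}\bigl(1-\frac{k}{2n}\bigr)^{-2}$ actually goes the wrong way, but it is also unnecessary, since $(2n-j)^2\le 4n^2$ already gives $\frac{nj}{(2n-j)^2}\ge\frac{j}{4n}$, which is all you need.
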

\begin{proof}[Proof of Proposition~\ref{prop:binomial-error}]
\begin{align*} 
\frac{\binom{2n}{k}}{\binom{4n}{k}} &= \prod_{j=0}^{k-1} \frac{2n -j}{4n-j}= \frac{1}{2^k} \prod_{j=0}^{k-1}\left( 1 - \frac{j}{4n-j}\right)\\  &> \frac{1}{2^k}\left(1-\frac{k}{4n-k}\right)^k
\sim 2^{-k}k^{-1/4} \gg 2^{-n^{0.5+\eps}},
\end{align*}
where the last estimate is true since  $k\ll n^{0.5+\eps/2}$.

On the other hand since $k \gg n^{0.5}$ we have,
\begin{multline*} \frac{\binom{2n}{k}^2}{\binom{n}{k}\binom{4n}{k}} = \prod_{j=0}^{k-1} \frac{(2n- j)^2}{(n - j)(4n -j)}  
=  \prod_{j=0}^{k-1} \left(1 + \frac{jn}{(n-j)(4n-j)} \right) 
\geq \\ \geq  \prod_{j=0}^{k-1} \left(1 + \frac{j}{4n} \right)
\geq\left(1+\frac{k}{8n}\right)^{k/2}
 \sim e^{k^2 / (16n)} =  k^{1/16} \gg 1. \end{multline*}
\end{proof}

\section{Conjecture~\ref{conj:sharp-error}: A discussion}
\label{sec:conj}
We explain our intuition and motivation in Conjecture~\ref{conj:sharp-error}. First observe that
the proof of Theorem~\ref{thm:weak-conjecture}
shows that if we could construct for a fixed dimension $d$ and any $k\in\Nb$ a 
harmonic function $u$ such that 
$Q_u(n)=\binom{n}{k}$ then
the conjecture would follow by Proposition~\ref{prop:binomial-error}.
However it seems that this is only possible to do for $k\leq d$ (see Remark~\ref{rem:possibleQ}).
A natural way to try to construct a polynomial $u$ for which $Q_u$ would approximate $\binom{n}{k}$ is
to start with the polynomial $u_k=\Re z^k$ for which
$q_{u_k}(r)=C r^{2k}$ and to discretize it. 
Here we recall an algorithm for the discretization process 
due to Jerison-Levine-Sheffield~\cite{jls-duke14},
the origins of which can be found in~\cite{lovasz-discrete-analytic-survey}.
\subsection{The Jerison-Levine-Sheffield construction}
\label{sec:correspondence}
We essentially describe the construction from~\cite{jls-duke14}.
\vspace{1ex}

\begin{notation*}
Given a sequence of functions $(F_k)_{k=0}^{\infty}:\Zb\to\Rb$, and a multi-index $\alpha\in\Nb_0^{d}$, we define
the function $F_{\alpha}:\Zb^d\to\Rb$ by
$$\forall x=(x_1,\ldots, x_d)\quad F_\alpha(x):=\prod_{l=1}^d F_{\alpha_l}(x_l).$$
\end{notation*}

\begin{theorem}[Correspondence principle]
\label{thm:correspondence}
Let $F_k:\Zb\to\Rb$, $k\in\Nb_0$, be a sequence of functions
such that
$\Delta F_1=\Delta F_0 = 0$, and $\Delta F_k = AF_{k-2}$ for all $k\geq 2$ and for some $A\in\Rb$.
Let
$$P(x)=\sum_{|\alpha|\leq M} a_{\alpha} \frac{x^{\alpha}}{\alpha!}\ $$
be a harmonic polynomial in $\Rb^d$.
Then, 
the polynomial
$$P^{\Zb}(x)=\sum_{|\alpha|\leq M} a_{\alpha} F_{\alpha}(x),$$
is harmonic in $\Zb^d$.
\end{theorem}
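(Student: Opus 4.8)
The plan is to reduce everything to the one‑variable recurrence by exploiting the product structure of $F_\alpha$ together with a discrete Leibniz rule for the Laplacian. Write $\Delta$ for the combinatorial Laplacian of $\Zb^d$ with the standard generators $S=\{\pm e_1,\dots,\pm e_d\}$ (so $|S|=2d$), and $\Delta_{\Zb}$ for the one‑dimensional combinatorial Laplacian, $(\Delta_{\Zb}g)(m)=\tfrac12 g(m+1)+\tfrac12 g(m-1)-g(m)$, which is exactly the operator appearing in the hypothesis $\Delta F_k=AF_{k-2}$. The first step is the elementary identity
\[
(\Delta F_\alpha)(x)=\frac1d\sum_{j=1}^{d}(\Delta_{\Zb}F_{\alpha_j})(x_j)\prod_{l\ne j}F_{\alpha_l}(x_l),
\]
which follows at once from $F_\alpha(x\pm e_j)=F_{\alpha_j}(x_j\pm 1)\prod_{l\ne j}F_{\alpha_l}(x_l)$: moving by $\pm e_j$ disturbs only the $j$‑th factor, so the second difference in the $j$‑th direction pulls out a factor $2(\Delta_{\Zb}F_{\alpha_j})(x_j)$, and one divides by $|S|=2d$.

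The second step is to observe that the three hypotheses on $(F_k)$ collapse into a single uniform recurrence once we adopt the convention $F_{-1}\equiv F_{-2}\equiv 0$: then $\Delta_{\Zb}F_k=AF_{k-2}$ holds for \emph{every} $k\ge 0$, the cases $k=0,1$ being precisely $\Delta_{\Zb}F_0=\Delta_{\Zb}F_1=0$. Feeding this into the identity above, and adopting the matching convention $F_\beta\equiv 0$ whenever some component $\beta_l$ is negative, we obtain
\[
(\Delta F_\alpha)(x)=\frac{A}{d}\sum_{j=1}^{d}F_{\alpha-2e_j}(x).
\]

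The third step records what harmonicity of $P$ says about its coefficients. Since $\partial_j^2\bigl(x^\alpha/\alpha!\bigr)=x^{\alpha-2e_j}/(\alpha-2e_j)!$ when $\alpha_j\ge 2$ and vanishes when $\alpha_j\in\{0,1\}$ — which is exactly why $P$ is written with the normalizing factors $\alpha!$ — harmonicity of $P$ reads $0=\Delta_{\Rb^d}P=\sum_{\beta}\bigl(\sum_{j=1}^{d}a_{\beta+2e_j}\bigr)x^\beta/\beta!$, hence $\sum_{j=1}^{d}a_{\beta+2e_j}=0$ for every multi‑index $\beta$. Now assemble: by linearity and the second step,
\[
(\Delta P^{\Zb})(x)=\sum_{|\alpha|\le M}a_\alpha(\Delta F_\alpha)(x)=\frac{A}{d}\sum_{j=1}^{d}\ \sum_{|\alpha|\le M}a_\alpha F_{\alpha-2e_j}(x),
\]
and reindexing each inner sum by $\beta=\alpha-2e_j$ — the terms with a negative component of $\beta$ dropping out by our convention, matching exactly the monomials $\partial_j^2$ annihilates — turns the right‑hand side into $\frac{A}{d}\sum_{\beta}\bigl(\sum_{j=1}^{d}a_{\beta+2e_j}\bigr)F_\beta(x)$, which is $0$ by the coefficient identity. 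Thus $\Delta P^{\Zb}\equiv 0$.

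The computation is entirely mechanical, so there is no real obstacle; the only thing to watch is the consistent use of the convention $F_{-1}=F_{-2}=0$ and its compatibility with $\partial_j^2$ killing monomials of degree $\le 1$ in a given variable, together with the shift $\alpha\leftrightarrow\alpha-2e_j$ in the final double sum (one must check the ranges of $\alpha$ and $\beta$ match up, which they do since $|\beta+2e_j|=|\beta|+2\le M$). An alternative route is induction on $\deg P$, but the direct coefficient bookkeeping above seems cleanest.
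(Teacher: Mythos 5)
Your proof is correct and complete. Note that the paper itself gives no proof of Theorem~\ref{thm:correspondence} --- it is quoted from the construction of Jerison--Levine--Sheffield \cite{jls-duke14} --- so there is nothing internal to compare against; your argument is precisely the standard one underlying that correspondence. The three ingredients are all handled properly: the product structure of $F_\alpha$ reduces the $\Zb^d$-Laplacian to the one-variable operator $\Delta_{\Zb}$ coordinate by coordinate (with the correct normalization $|S|=2d$ versus $|S|=2$); the convention $F_{-1}=F_{-2}\equiv 0$ merges the hypotheses into the single recurrence $\Delta_{\Zb}F_k=AF_{k-2}$, giving $(\Delta F_\alpha)(x)=\tfrac{A}{d}\sum_j F_{\alpha-2e_j}(x)$; and the reindexing $\beta=\alpha-2e_j$ matches this exactly against the relations $\sum_j a_{\beta+2e_j}=0$ (for $|\beta|\le M-2$) coming from $\Delta_{\Rb^d}P=0$, which is where the normalization $x^\alpha/\alpha!$ in the statement is used. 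No linear independence of the $F_\beta$ is needed, since you only conclude that each coefficient in the expression for $\Delta P^{\Zb}$ vanishes, and the range bookkeeping at the boundary $|\alpha|\le M$ is consistent on both sides.
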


The proposition below gives a concrete sequence of functions $F_k:\Zb^d\to\Rb$
which can be plugged into the correspondence principle.
\begin{proposition}
\label{prop:Fk}
Let $F_k:\Zb\to\Rb$, $k\in\Nb_0$, be defined as follows:
\begin{align*}
F_0(x)&=1, \\ 
\forall k>0\ F_k(x) &= \binom{ x+ \frac{k-1}{2}}{k} = \frac{1}{k!}
\prod_{j=0}^{k-1} \left(x+\frac{k-1}{2}-j\right).
\end{align*}

Then
\begin{enumerate}
\item \label{prop:1st-derivative} $F_{k}'(x)=F_{k-1}(x+\frac{1}{2})$
\item \label{prop:2nd-derivative}
 $\Delta F_1=\Delta F_0 = 0$ and $\Delta F_k = \frac{1}{2}F_{k-2}$ for all
$k\geq 2$.
\end{enumerate}
\end{proposition}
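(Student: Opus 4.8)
The plan is to verify the two assertions by direct computation with the explicit formula for $F_k$, treating part~\eqref{prop:1st-derivative} first since part~\eqref{prop:2nd-derivative} will follow easily from it.

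For part~\eqref{prop:1st-derivative}, I would write $F_k(x) = \frac{1}{k!}\prod_{j=0}^{k-1}\left(x+\frac{k-1}{2}-j\right)$ and compute $F_k'(x) = F_k(x+1) - F_k(x)$. The two products $\prod_{j=0}^{k-1}\left(x+1+\frac{k-1}{2}-j\right)$ and $\prod_{j=0}^{k-1}\left(x+\frac{k-1}{2}-j\right)$ share the $k-1$ common factors corresponding to $j=0,\dots,k-2$ in the first and $j=1,\dots,k-1$ in the second, namely the factors $\left(x+\frac{k-1}{2}-j\right)$ for $j=0,\dots,k-2$. Factoring these out, the difference telescopes to $\frac{1}{k!}\left(\prod_{j=0}^{k-2}\left(x+\frac{k-1}{2}-j\right)\right)\cdot\left[\left(x+\frac{k+1}{2}\right) - \left(x-\frac{k-1}{2}\right)\right] = \frac{1}{k!}\left(\prod_{j=0}^{k-2}\left(x+\frac{k-1}{2}-j\right)\right)\cdot k$. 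It then remains to recognize this as $F_{k-1}(x+\tfrac12)$: indeed $F_{k-1}(y) = \frac{1}{(k-1)!}\prod_{j=0}^{k-2}\left(y+\frac{k-2}{2}-j\right)$, and substituting $y = x+\tfrac12$ gives exactly $\frac{1}{(k-1)!}\prod_{j=0}^{k-2}\left(x+\frac{k-1}{2}-j\right)$, matching the expression above. (The cases $k=0,1$ are checked directly: $F_0'\equiv 0 = $ (empty product convention), and $F_1(x)=x$ gives $F_1'(x)=1=F_0(x+\tfrac12)$.)

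For part~\eqref{prop:2nd-derivative}, recall $\Delta$ on $\Zb$ is the symmetric second difference $(\Delta F)(x) = \frac{1}{2}\bigl(F(x+1)+F(x-1)-2F(x)\bigr)$, which can be rewritten as $(\Delta F)(x) = \frac{1}{2}\bigl(F'(x) - F'(x-1)\bigr)$ where $F'(x)=F(x+1)-F(x)$ is the forward difference. Applying part~\eqref{prop:1st-derivative} twice: $F'_k(x) - F'_k(x-1) = F_{k-1}(x+\tfrac12) - F_{k-1}(x-\tfrac12)$, which is the forward difference of $F_{k-1}$ evaluated at $x-\tfrac12$, i.e. $F'_{k-1}(x-\tfrac12) = F_{k-2}(x)$ by part~\eqref{prop:1st-derivative} again. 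Hence $\Delta F_k = \frac{1}{2}F_{k-2}$ for $k\geq 2$. For $k=0,1$ the function $F_k$ is affine in $x$ (constant $1$, resp. $x$), so its symmetric second difference vanishes, giving $\Delta F_0 = \Delta F_1 = 0$.

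I do not expect any genuine obstacle here; the only mild care needed is bookkeeping with the half-integer shift $\frac{k-1}{2}$ in the definition — it is precisely this shift that makes $F_k'$ land on $F_{k-1}$ evaluated at $x+\tfrac12$ rather than at a shifted lattice, and keeping the shift consistent through the two applications of part~\eqref{prop:1st-derivative} is the one place a sign or offset error could creep in. An alternative, slightly cleaner route for part~\eqref{prop:1st-derivative} would be to use the Pascal-type identity $\binom{m+1}{k} - \binom{m}{k} = \binom{m}{k-1}$ with $m = x+\frac{k-1}{2}$, giving $F_k(x+1)-F_k(x) = \binom{x+1+\frac{k-1}{2}}{k} - \binom{x+\frac{k-1}{2}}{k} = \binom{x+\frac{k-1}{2}}{k-1} = \binom{(x+\frac12)+\frac{k-2}{2}}{k-1} = F_{k-1}(x+\tfrac12)$, which avoids manipulating the product form altogether.
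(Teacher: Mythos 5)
Your proof is correct and follows essentially the same route as the paper: part~\ref{prop:1st-derivative} by factoring the common $k-1$ factors out of $F_k(x+1)$ and $F_k(x)$ (the paper writes this as $F_k(x+1)=\tfrac{1}{k}(x+\tfrac{k+1}{2})F_{k-1}(x+\tfrac12)$ and $F_k(x)=\tfrac{1}{k}(x-\tfrac{k-1}{2})F_{k-1}(x+\tfrac12)$), and part~\ref{prop:2nd-derivative} by applying part~\ref{prop:1st-derivative} twice together with the identity $\Delta F(x)=\tfrac12 F''(x-1)$, which is exactly your $\Delta F(x)=\tfrac12\bigl(F'(x)-F'(x-1)\bigr)$.
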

\begin{proof}
First we check that part~\ref{prop:2nd-derivative}
follows from part~\ref{prop:1st-derivative}.
In fact,  by part~\ref{prop:1st-derivative} $F_k''(x)=F_{k-1}'(x+\frac{1}{2})=F_{k-2}(x+1)$.
It remains to observe that $\Delta F(x)=\frac{1}{2}F''(x-1)$.
To prove part~\ref{prop:1st-derivative} we observe
that
\begin{align*}
F_k(x+1) &=\frac{1}{k}\left(x+\frac{k+1}{2}\right)F_{k-1}\left(x+\frac{1}{2}\right) \\
\mbox{and}\quad F_k(x) &= \frac{1}{k}\left(x-\frac{k-1}{2}\right)F_{k-1}
\left(x+\frac{1}{2}\right)\ .
\end{align*}
\end{proof}
\begin{remark*}
The family $F_k(x)=\binom{x+\lfloor k/2 \rfloor}{k}$
is implicitly used in~\cite{lovasz-discrete-analytic-survey}.
\end{remark*}
\subsection{Specialization of Conjecture~\ref{conj:sharp-error}}
Let $F_k:\Zb\to \Rb$ be as in Proposition~\ref{prop:Fk}. Define
\begin{align*}
 \forall k\geq 0\quad\quad S_k(x,y)&:= \sum_{j=0}^{\lfloor k/2 \rfloor} (-1)^j F_{k-2j}(x) F_{2j}(y),\\
\mbox{and }\ \forall k\geq 1\quad\quad T_k(x,y)&:= \sum_{j=0}^{\lfloor (k-1)/2 \rfloor} (-1)^j F_{k-(2j+1)}(x)F_{2j+1}(y).
\end{align*}
Then, $S_k, T_k$ are harmonic in $\Zb^2$. This can be immediately seen
 from the fact
that $\Re (x+iy)^k$ and $\Im (x+iy)^k$ are harmonic in $\Rb^2$ and
Theorem~\ref{thm:correspondence}.

We believe that $S_k$ is a family of harmonic functions which gives the optimal error term in Theorem~\ref{thm:discrete-propagation-of-smallness}, namely,
\begin{conjecture} Let $C>0$. Then
$$Q_{S_k}(2n)>C\sqrt{Q_{S_k}(n)Q_{S_k}(4n)}+2^{-n^{0.5+\eps}}Q_{S_k}(4n)$$
for $k$ large enough and $n\sim k^2/\log k$.
\end{conjecture}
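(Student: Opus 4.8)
\emph{The plan} is to compute the growth function $Q_{S_k}$ explicitly enough to run a sharpened form of the argument behind Proposition~\ref{prop:binomial-error}. The guiding picture: since the continuous model of $S_k$ is the \emph{pure} monomial $q_{\Re z^k}(r)=\tfrac12 r^{2k}$, the discrete growth function $Q_{S_k}(n)=\sum_{l}a_l\binom nl$ (with $a_l=\Delta^l(S_k^2)(0)\ge0$ by Theorem~\ref{thm:high-powers-laplacian}, and $a_l=0$ for $l>k$) should be a ``smeared'' binomial coefficient concentrated near $l=k$; the smearing weakens but does not destroy the failure of log-convexity that $\binom\cdot k$ exhibits near $n\sim k^2/\log k$, while the error term $2^{-n^{0.5+\eps}}$ is far too small to compensate. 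Concretely I expect the three-circles ratio of $Q_{S_k}$ at $n\sim k^2/\log k$ to still tend to $\infty$, but only by a factor $k^{o(1)}$ smaller than that of $\binom\cdot k$ itself.

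First I would record a closed form for the discretised powers. From Proposition~\ref{prop:Fk} one deduces that $G_F(x,w):=\sum_{k\ge0}F_k(x)w^k$ satisfies $G_F(x+1,w)-G_F(x,w)=wG_F(x+\tfrac12,w)$ and $\Delta_xG_F=\tfrac12w^2G_F$, hence $G_F(x,w)=\big(1+\tfrac{w^2}4\big)^{-1/2}e^{\,2x\,\mathrm{arcsinh}(w/2)}$, and therefore
\[\sum_{k\ge0}S_k(x,y)w^k=\frac{e^{\,2x\,\mathrm{arcsinh}(w/2)}\cos\!\big(2y\arcsin(w/2)\big)}{\sqrt{1-w^4/16}}=:H(x,y,w)\]
(with $S_1=x$, $S_2=\tfrac12(x^2-y^2)$ as checks). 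Squaring and extracting the diagonal coefficient gives $Q_{S_k}(n)=[w^k(w')^k]\,\Eb\!\big[H(X_n,w)H(X_n,w')\big]$, and since $\Eb[\alpha^{X_n^{(1)}}\beta^{X_n^{(2)}}]=\big(\tfrac{\alpha+\alpha^{-1}+\beta+\beta^{-1}}4\big)^n$ for the simple random walk on $\Z^2$, the product-to-sum formula for cosines turns this into an elementary expression. Writing $\psi$ for the analytic function with $ww'\psi(w,w')=\sinh^2\!\big(\mathrm{arcsinh}\tfrac w2+\mathrm{arcsinh}\tfrac{w'}2\big)-\sin^2\!\big(\arcsin\tfrac w2-\arcsin\tfrac{w'}2\big)$, one finds $\psi(0,0)=1$, $\psi-1=\tfrac14 ww'+O(\mathrm{deg}\,4)$, and
\[Q_{S_k}(n)=\sum_{l=0}^k a_l\binom nl,\qquad a_l=\tfrac12\,[w^{k-l}(w')^{k-l}]\,\frac{\psi(w,w')^{\,l}}{\sqrt{(1-w^4/16)(1-w'^4/16)}}\ +\ (\text{a piece supported on }l\le k/2),\]
so in particular $a_k=\tfrac12$.

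Next I would reduce to a one-parameter model. The $l\le k/2$ piece (the ``$\arcsin\tfrac w2+\arcsin\tfrac{w'}2$'' branch) is crushed by the weight $\binom nl/\binom nk\le(C\log k/k)^{k/2}$, which holds for $n\asymp k^2/\log k$, so even a crude Cauchy bound $|a_l|\le C_0^{k}$ makes it $o\!\big(\binom nk\big)$. For the main branch, expanding $\psi^l$ gives $a_{k-j}=\tfrac12\binom{k-j}{j}4^{-j}\big(1+O(j^2/k)\big)$, and $\binom n{k-j}=\binom nk(k/n)^j\big(1+o(1)\big)$ uniformly for $j\lesssim\log k$; combining,
\[Q_{S_k}(m)=\tfrac12\binom mk\, g\!\Big(\tfrac k{4m}\Big)\big(1+o(1)\big),\qquad g(\beta):=\sum_j\binom{k-j}{j}\beta^j=[z^k]\frac1{1-z-\beta z^2}.\]
A residue computation gives $\log g(\beta)=k\beta+O(\beta)+O(k\beta^2)$, i.e. $k\beta+o(1)$ for $\beta=\Theta(\log k/k)$. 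Now put $\beta_m=k/(4m)$, so $k\beta_m=k^2/(4m)$ and $\beta_{2n}=\tfrac12\beta_n$, $\beta_{4n}=\tfrac14\beta_n$: then $2\log g(\beta_{2n})-\log g(\beta_n)-\log g(\beta_{4n})=-\tfrac14 k\beta_n+o(1)=-\tfrac{k^2}{16n}+o(1)$, whereas the \emph{sharp} form of the estimate in the proof of Proposition~\ref{prop:binomial-error} is $2\log\binom{2n}k-\log\binom nk-\log\binom{4n}k=\tfrac{k^2}{8n}\big(1+o(1)\big)$. Adding the two,
\[\log\frac{Q_{S_k}(2n)^2}{Q_{S_k}(n)Q_{S_k}(4n)}=\Big(\tfrac{k^2}{8n}-\tfrac{k^2}{16n}\Big)\big(1+o(1)\big)=\tfrac{k^2}{16n}\big(1+o(1)\big)\ \xrightarrow[k\to\infty]{}\ \infty,\]
and since $\binom{4n}k\le 2^k\binom{2n}k$, $Q_{S_k}(4n)\le\mathrm{poly}(k)\binom{4n}k$, and $n^{0.5+\eps}\gg k$, the error term $2^{-n^{0.5+\eps}}Q_{S_k}(4n)$ is $o\!\big(Q_{S_k}(2n)\big)$. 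Thus the conjectured strict inequality would follow for every fixed $C$ once $k$ is large and $n\sim k^2/\log k$.

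The hard part will be the quantitative control, and this is precisely why the statement is a conjecture: the surviving margin $\tfrac{k^2}{16n}=\tfrac1{16}\log k$ is the \emph{difference} of two $\Theta(\log k)$ quantities, so every approximation above must be made effective to within a multiplicative factor $e^{o(\log k)}=k^{o(1)}$, \emph{simultaneously} and uniformly over $m\in\{n,2n,4n\}$ — the coefficient asymptotics $a_{k-j}\sim\tfrac12\binom{k-j}{j}4^{-j}$, the replacement of $\binom n{k-j}$ by $\binom nk(k/n)^j$, the negligibility of the $l\le k/2$ branch, and the asymptotics of $g(\beta)$ at $\beta k=\Theta(\log k)$. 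None of these is individually deep, but assembling all of them at this precision, rather than any single conceptual obstruction, is what stands between the heuristic above and an actual proof.
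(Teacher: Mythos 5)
This statement is not proved in the paper at all: it is the closing conjecture of Section~\ref{sec:conj}, and the paper's only support for it is the surrounding discussion (the conjecture would follow from Proposition~\ref{prop:binomial-error} if one could realize $Q_u(n)=\binom{n}{k}$ exactly in fixed dimension, which Remark~\ref{rem:possibleQ} rules out for $k>d$, whence the proposal of the family $S_k$). So there is no paper proof to compare with, and the real question is whether your argument settles the open statement. It does not, and you say so yourself. Your skeleton is, as far as I can check, sound: the generating function $G_F(x,w)=(1+w^2/4)^{-1/2}e^{2x\,\mathrm{arcsinh}(w/2)}$, the formula for $H$, the two-branch decomposition coming from $\tfrac12(\cosh 2A+\cos 2B)=1+\sinh^2A-\sin^2B$, the vanishing of the second branch above $l=k/2$, $a_k=\tfrac12$, and the final numerology ($\tfrac{k^2}{8n}$ gain from the binomials versus $\tfrac{k^2}{16n}$ loss from $g(k/4m)$, leaving a margin $\tfrac1{16}\log k$, with the $2^{-n^{0.5+\eps}}$ term harmless because $n^{0.5+\eps}\gg k$) all pass the consistency checks I ran ($S_1=x$, $S_2=(x^2-y^2)/2$, $\psi(0,0)=1$, $\psi-1=\tfrac14ww'+O(\deg 4)$). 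This is genuinely more concrete than anything in the paper and is a plausible roadmap; in particular the prediction that the log-convexity defect of $Q_{S_k}$ at $n\sim k^2/\log k$ is $\tfrac{k^2}{16n}(1+o(1))$, half of the pure-binomial defect, is new relative to the paper's discussion.

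The genuine gap is exactly the part you defer. The step $a_{k-j}=\tfrac12\binom{k-j}{j}4^{-j}\bigl(1+O(j^2/k)\bigr)$ is asserted, not proved: the degree-$\ge 4$ monomials of $\psi$ (such as $w^4$, $w^2w'^2$, $w'^4$) and the factors $(1-w^4/16)^{-1/2}$ contribute \emph{additively} to $[w^jw'^j]\,\psi^{k-j}/\sqrt{(1-w^4/16)(1-w'^4/16)}$, and turning that into a uniform multiplicative $1+o(1)$ requires genuine work; likewise the truncation to $j\lesssim\log k$, the tail $j\gg\log k$ in the sum $\sum_j a_{k-j}\binom{m}{k-j}$, the replacement $\binom{m}{k-j}\approx\binom mk(k/m)^j$, and the negligibility of the $l\le k/2$ branch must all be made effective \emph{simultaneously} for $m\in\{n,2n,4n\}$. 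Since the surviving margin is only $e^{\frac1{16}\log k}=k^{1/16}$ in the ratio $Q_{S_k}(2n)^2/(Q_{S_k}(n)Q_{S_k}(4n))$, any uncontrolled factor $k^{c}$ with $c\ge 1/16$ in any one of these approximations destroys the conclusion; nothing in your write-up excludes that. Until those estimates are carried out, what you have is a careful heuristic supporting the conjecture (and a promising strategy for proving it), not a proof, and the statement remains open exactly as in the paper.
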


\begin{bibdiv}
\begin{biblist}
\bib{agmon66}{book}{
   author={Agmon, Shmuel},
   title={Unicit\'e et convexit\'e dans les probl\`emes diff\'erentiels},
   series={S\'eminaire de Math\'e\-matiques Sup\'er\-ieures, No. 13 (\'Et\'e,
   1965)},
   publisher={Les Presses de l'Universit\'e de Montr\'eal, Montreal, Que.},
   date={1966},
   pages={152},
}

\bib{bernstein1914}{article}{
   author={Bernstein, Serge},
   title={Sur la d\'efinition et les propri\'et\'es des fonctions
   analytiques d'une variable r\'eelle},
   language={French},
   journal={Math. Ann.},
   volume={75},
   date={1914},
   number={4},
   pages={449--468},
}
%

\bib{carleman-1933}{article}{
   author={Carleman, Torsten},
   title={Sur une in\'egalit\'e diff\'erentielle dans
   la th\`eorie des fonctions analytiques},
   language={French},
   journal={C. R. Acad. Sci. Paris},
   volume={196},
   date={1933},
   pages={995--997},
}

\bib{guadie-malinnikova-unique-continuation}{article}{
   author={Guadie, Maru},
   author={Malinnikova, Eugenia},
   title={A note on unique continuation for discrete harmonic functions},
   status={preprint},
   date={2013},
   eprint={arXiv:math/1306.1418},
}

\bib{jls-duke14}{article}{
   author={Jerison, David},
   author={Levine, Lionel},
   author={Sheffield, Scott},
   title={Internal DLA and the Gaussian free field},
   journal={Duke Math. J.},
   volume={163},
   date={2014},
   number={2},
   pages={267--308},
   issn={0012-7094},
}

\bib{landis63}{article}{
   author={Landis, E. M.},
   title={Some questions in the qualitative theory of second-order elliptic
   equations (case of several independent variables)},
   journal={Uspehi Mat. Nauk},
   volume={18},
   date={1963},
   number={1 (109)},
   pages={3--62},
   translation={journal={Russian Math. Surveys},
                volume={18},
                date={1963},
                pages={1--62},
                },
   issn={0042-1316},
}

\bib{lovasz-discrete-analytic-survey}{article}{
   author={Lov{\'a}sz, L{\'a}szl{\'o}},
   title={Discrete analytic functions: an exposition},
   conference={
      title={Surveys in differential geometry. Vol. IX},
   },
   book={
      series={Surv. Differ. Geom., IX},
      publisher={Int. Press, Somerville, MA},
   },
   date={2004},
   pages={241--273},
}

\bib{polya-szego-vol-I}{book}{
   author={P{\'o}lya, G.},
   author={Szeg{\H{o}}, G.},
   title={Problems and theorems in analysis. Vol. I: Series, integral
   calculus, theory of functions},
   note={Translated from the German by D. Aeppli;
   Die Grundlehren der mathematischen Wissenschaften, Band 193},
   publisher={Springer-Verlag},
   place={New York},
   date={1972},
   pages={xix+389},
}

\bib{widder41}{book}{
   author={Widder, David Vernon},
   title={The Laplace Transform},
   series={Princeton Mathematical Series, v. 6},
   publisher={Princeton University Press},
   place={Princeton, N. J.},
   date={1941},
   pages={x+406},
}

\end{biblist}
\end{bibdiv}

\vfill
\newpage
\noindent Gabor Lippner,\\
Department of Mathematics, Harvard University,\\
 Cambridge, MA, USA\\
\smallskip
\texttt{lippner@math.harvard.edu}
\vspace{1ex}

\noindent Dan Mangoubi,\\ 
Einstein Institute of Mathematics,
Hebrew University, Givat Ram,\\
Jerusalem,
Israel\\
\smallskip
\texttt{mangoubi@math.huji.ac.il}
\end{document}